\newtheorem{theorem}{Theorem}[section]
\newtheorem{assumption}[theorem]{Assumption}
\newtheorem{convention}[theorem]{Convention}
\newtheorem{definition}[theorem]{Definition}
\newtheorem{lemma}[theorem]{Lemma}
\newtheorem{problem}[theorem]{Problem}
\newtheorem{proposition}[theorem]{Proposition}
\newtheorem{remark}[theorem]{Remark}
\makeatletter\@addtoreset{equation}{section}\makeatother
\newenvironment{proof}[1][Proof]{\noindent\textbf{#1.} }{\ \rule{0.5em}{0.5em}}
\newdimen\dummy
\newcommand\intervalCC[1]{\left[#1\right]}
\newcommand\intervalCO[1]{\left[#1\right[}
\newcommand\intervalOO[1]{\left]#1\right[}
\newcommand\identity{Id}
\newcommand\de{\ensuremath{\mathop{}\!\mathrm{d}}}
\newcommand\transpose[1]{\ensuremath{#1^{\top}}}
\newcommand\imaginary{\ensuremath{\mathrm{i}}}
\newcommand\conjugate[1]{\ensuremath{\overline{#1}}}
\newcommand\vectorValued[1]{\ensuremath{\boldsymbol{#1}}}
\newcommand\timeOperator[1]{\ensuremath{\mathcal{#1}}}
\newcommand\laplaceOperator[1]{\ensuremath{\mathsf{#1}}}
\newcommand\timeFunction[1]{\ensuremath{#1}}
\newcommand\laplaceFunction[1]{\ensuremath{\hat{#1}}}
\newcommand\normalN{\ensuremath{\operatorname n}}
\newcommand\swapifbranches[3]{#1{#3}{#2}}
\patchcmd{\DeclarePairedDelimiter}{\@ifstar}{\swapifbranches\@ifstar}{}{}
\DeclarePairedDelimiter\abs{\lvert}{\rvert}
\DeclarePairedDelimiter\norm{\lVert}{\rVert}
\DeclarePairedDelimiter\restrict{.}{\vert}
\DeclareMathOperator\real{Re}
\DeclareMathOperator\diag{diag}
\DeclareMathOperator\divergence{div}
\newcommand\gradient{\ensuremath{\nabla}}
\DeclareMathOperator\dataDirichlet{D}
\DeclareMathOperator\dataNeumann{N}
\DeclareMathOperator\dataImpedance{I}
\DeclareMathOperator\dataJump{J}
\DeclareMathOperator\e{e}
\newcommand\trace[1]{\ensuremath{\vectorValued \gamma_{#1}}}
\newcommand\traceFull{\ensuremath{\trace{\operatorname C}}}
\newcommand\traceScaled[1]{\ensuremath{\trace{#1}(s)}}
\newcommand\traceFullScaled{\ensuremath{\traceFull(s)}}
\newcommand\traceD[1]{\ensuremath{\gamma_{\dataDirichlet, #1}}}
\newcommand\traceN[1]{\ensuremath{\gamma_{\dataNeumann, #1}}}
\newcommand\traceDFull{\ensuremath{\gamma_{\dataDirichlet}}}
\newcommand\traceNFull{\ensuremath{\gamma_{\dataNeumann}}}
\newcommand\traceNormal[1]{\ensuremath{\gamma_{\normalN, #1}}}
\newcommand\complementTrace[1]{\ensuremath{#1^c}}
\newcommand\imp[1]{\ensuremath{#1^{\operatorname{imp}}}}
\newcommand\zero[1]{\ensuremath{#1^0}}
\newcommand\aMix{\ensuremath{a^{\operatorname{mix}}}}
\newcommand\lMix{\ensuremath{\ell^{\operatorname{mix}}}}
\newcommand\aImp{\ensuremath{\imp{a}}}
\newcommand\lImp{\ensuremath{\ell^{\operatorname{imp}}}}
\newcommand\aZero{\ensuremath{\zero{a}}}
\newcommand\lZero{\ensuremath{\zero{\ell}}}
\newcommand\uZero{\ensuremath{\zero{u}}}
\newcommand\XTraces[1]{\ensuremath{\vectorValued X_{#1}}}
\newcommand\XSingle{\ensuremath{\vectorValued X^{\operatorname{single}}}}
\newcommand\XSingleZero{\ensuremath{\zero{\vectorValued X}}}
\newcommand\XMulti{\ensuremath{\vectorValued X^{\operatorname{mult}}}}
\begin{document}
\title{A Stable Boundary Integral Formulation of an Acoustic Wave Transmission
Problem with Mixed Boundary Conditions}
\author{S. Eberle\thanks{(eberle@math.uni-frankfurt.de), Institut f\"{u}r Mathematik,
Goethe-Universit\"{a}t Frankfurt am Main, Robert-Mayer-Str. 10, 60325
Frankfurt am Main, Germany}
\and F. Florian\thanks{(francesco.florian@uzh.ch), Institut f\"{u}r Mathematik,
Universit\"{a}t Z\"{u}rich, Winterthurerstr 190, CH-8057 Z\"{u}rich,
Switzerland}
\and R. Hiptmair\thanks{(ralf.hiptmair@sam.math.ethz.ch), Seminar für Angewandte Mathematik, ETH Zürich, Rämistrasse 101, CH-8092 Zürich, Switzerland}
\and S.A. Sauter\thanks{(stas@math.uzh.ch), Institut f\"{u}r Mathematik,
Universit\"{a}t Z\"{u}rich, Winterthurerstr 190, CH-8057 Z\"{u}rich,
Switzerland}}
\maketitle

\begin{abstract}
In this paper, we consider an acoustic wave transmission problem with mixed
boundary conditions of Dirichlet, Neumann, and impedance type. We will derive
a formulation as a \textit{direct}, \textit{space-time retarded boundary
integral equation}, where both Cauchy data are kept as unknowns on the
impedance part of the boundary. This requires the definition of single-trace
spaces which incorporate homogeneous Dirichlet and Neumann conditions on the
corresponding parts on the boundary. We prove the continuity and coercivity of
the formulation by employing the technique of operational calculus in the
Laplace domain.

\end{abstract}

\textbf{Keywords}: acoustic wave equation, transmission problem, impedance
boundary condition, retarded potentials, convolution quadrature

\section{Introduction}
\subsection{Transmission Problems}
In physics and engineering there are many important applications where it is
essential to obtain information on material properties inside (large) solid
objects, e.g., the detection of oil reservoirs, the investigation of the
interior of rocks and soil to understand its stability properties, or the
assessment of the ice volume in glaciers to name just a few of them. For this
purpose, typically, a wave is sent into the solid. Then the scattered wave is
recorded and used to solve the governing mathematical equations for the
quantity of interest.

Our goal is to employ the method of integral equation to reformulate the
scalar wave equation as a system of space-time boundary integral equations;
standard references on this topic include \cite{Stratton_book,friedman,bambduong,sayas_book}. The \textit{Cauchy
data}, i.e., Dirichlet and Neumann traces on the boundary, of the wave (or
boundary densities if an \textit{indirect formulation} is employed) is
determined as the solution of a system of retarded potential integral
equations (RPIE). To investigate well-posedness we employ the Laplace
transform and prove continuity and coercivity with respect to the frequency
variable. These techniques in the context of numerical analysis go back to the
pioneering works \cite{bambduong,lub1,lub2,duong03}; a
monograph on this topic is \cite{sayas_book} and some further developments can
be found, e.g., in \cite{LaSa2009} and \cite{banjai_coupling}.

We emphasize that the derivation of coercive and continuous integral equations
in the Laplace domain is key for their discretization by \textit{convolution
quadrature}. However, here we will focus on the continuous formulation and
prove its well-posedness.

\begin{figure}
  \begin{center}
    \includegraphics[scale=.3]{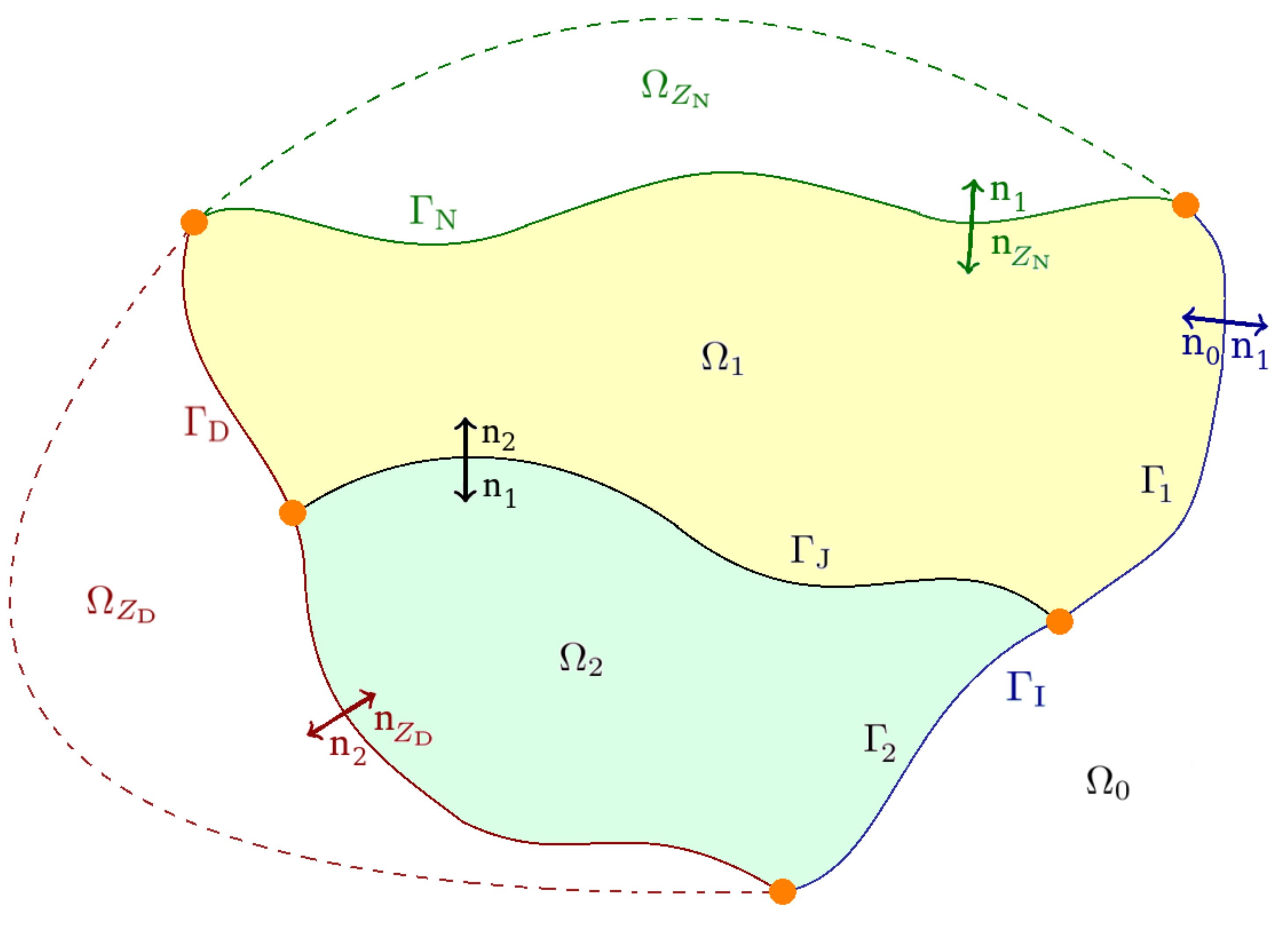}
  \end{center}  
  \caption{Cross section of the computational domain.
    The domain $\Omega$ is split into the disjoint open sets $\Omega_1, \Omega_2$, corresponding to different materials, and their interface $ \Gamma_{\dataJump} \coloneqq \overline \Omega_1 \cap \overline \Omega_2$.
    We set $\Gamma_1 \coloneqq \partial \Omega_1$ and $\Gamma_2 \coloneqq \partial \Omega_2$.
    The unbounded exterior domain is denoted by $\Omega_0 \coloneqq \mathbb{R}^3 \setminus \overline{\Omega}$.
    For $j = 0,1,2$ the unit normal vector pointing outside $\Omega_j$ is denoted by $\normalN_j$.
    The auxiliary domains $\Omega_{Z_{\dataDirichlet}},\Omega_{Z_{\dataNeumann}}$ are used in the definition of $\XSingleZero$ (cf. (\ref{DefXsinglespaceD}{})).}
  \label{domain}
\end{figure}

We consider a bounded Lipschitz domain $\Omega \subseteq \mathbb R^3$ partitioned as in Fig.~\ref{domain} by an interface $\Gamma_{\dataJump}$ into $\Omega_1, \Omega_2$.
The results of this
paper also hold for exterior problems in $\Omega_{0}:=\mathbb{R}%
\backslash\overline{\Omega}$ with bounded interface $\Gamma_{\dataJump}$ lying
in the exterior domain, and when splitting $\Omega$ to any finite number of
subdomains. In order to simplify the notations, we restrict ourselves to the
case of the interior problem and two subdomains.

For $i\in\{0,1,2\}$ we set $\Gamma_{i}\coloneqq\partial\Omega_{i}$ and employ
the convention that $n_{i}$ is the unit normal vector field at $\partial
\Omega_{i}$ pointing into the exterior of $\Omega_{i}$.
The \emph{skeleton manifold} is defined by $\Sigma \coloneqq \Gamma_1 \cup \Gamma_2$.

We also introduce a partition of the boundary, corresponding to different types of boundary conditions (see again Fig.~\ref{domain}):
we split $\Sigma = \Gamma_{\dataDirichlet} \cup \Gamma_{\dataNeumann} \cup \Gamma_{\dataImpedance} \cup \Gamma_{\dataJump}$;
transmission (jump) conditions will be imposed at $\Gamma_{\dataJump}$, Dirichlet boundary conditions at $\Gamma_{\dataDirichlet}$, Neumann boundary conditions at $\Gamma_{\dataNeumann}$, and an impedance condition at $\Gamma_{\dataImpedance}$;
we do not require $\Gamma_{\dataDirichlet}, \Gamma_{\dataNeumann}, \Gamma_{\dataImpedance}$ to be connected -- however we assume the relative interiors of these subsets are disjoint.

The new mathematical aspect of our setting is the presence of an interface
\emph{and} general mixed boundary conditions of Dirichlet, Neumann, and
impedance type. We do not impose restrictions on where the interface meets the
domain boundary.

The resulting transmission initial-boundary value problem to be solved for
$u\in H^{1}\left(  \left[  0,T\right]  \times\Omega\right)  $ is
\begin{subequations}
  \label{trans_probl}
  \begin{align}[left= {\empheqlbrace}]
    p_1^2 \partial_t^2 u_1 - a_1^2 \Delta u_1 =                                                        & 0 \text{ in } \Omega_1 \times [0,T],                                                       \\
    p_2^2 \partial_t^2 u_2 - a_2^2 \Delta u_2 =                                                        & 0 \text{ in } \Omega_2 \times [0,T],                                                       \\
    [u]_{\Gamma_{\dataJump}} = \left[ a^2 \frac{\partial u}{\partial n} \right]_{\Gamma_{\dataJump}} = & 0 \text{ on } \Gamma_{\dataJump} \times [0,T], \label{trans_probl:J}                       \\
    u =                                                                                                & g_{\dataDirichlet} \text{ on } \Gamma_{\dataDirichlet} \times [0,T], \label{trans_probl:D} \\
    a^2 \frac{\partial u}{\partial n} - \timeOperator T * \dot u =                                     & d_{\dataImpedance} \text{ on } \Gamma_{\dataImpedance} \times [0,T], \label{trans_probl:I} \\
    a^2 \frac{\partial u}{\partial n} =                                                                & d_{\dataNeumann} \text{ on } \Gamma_{\dataNeumann} \times [0,T], \label{trans_probl:N}     \\
    u(0, \cdot) = \dot u(0,\cdot) =                                                                    & 0 \text{ in } \Omega;
  \end{align}
\end{subequations}
here and in the following, we employ the shorthand $\dot{u}$ for $\partial_{t} u$;
$*$ denotes the convolution in time, for $i \in \{1,2\}$, $u_i \coloneqq \restrict{u}_{\Omega_i}$ and the functions $a$ and $p$ are defined on $\Omega_0 \cup \Omega_1 \cup \Omega_2$:
\begin{equation}
  \label{defatau}
  \left. a\right\vert _{\Omega_{\ell}}=a_{\ell},\quad\left. p\right\vert_{\Omega_{\ell}}=p_{\ell},\quad\ell=0,1,2
\end{equation}
via the material-dependent constant coefficients $a_1, a_2, p_1, p_2 > 0$.
They are extended to positive functions $a_0(x)$, $p_0(x)$ to the exterior domain $\Omega_0$, such that $a,p$ are continuous \emph{across} the interface $\Gamma_0$, while they are, in general, discontinuous \emph{along} $\Gamma_{0}$ at points where the interface meets
$\partial\Omega$.
The temporal convolution operator $\timeOperator T$ may depend on $p$ and $a$.
For the boundary data we assume (postponing the introduction of the relevant Sobolev spaces to Section~\ref{SecCaldOp}):
\begin{equation*}
  g_{\dataDirichlet} \in \tilde{H}_{\dataImpedance}^{1/2}(\Gamma_{\dataDirichlet}),\qquad
  d_{\dataNeumann} \in \tilde H^{-1/2}_{\dataImpedance}(\Gamma_{\dataNeumann}),\qquad
  d_{\dataImpedance}\in \tilde H^{-1/2}_{\dataNeumann}(\Gamma_{\dataImpedance}).
\end{equation*}
In (\ref{trans_probl:J}), the direction is not relevant;
$\left[
\cdot\right]  _{\Gamma_{\dataJump}}$ denotes the jump of a function across the
interface $\Gamma_{\dataJump}$. The temporal convolution operator
$\mathcal{T}$ is a Dirichlet-to-Neumann ($\operatorname*{DtN}$) operator or an
approximation to it.
The simplest approximation is given by impedance boundary conditions: $\timeOperator{T}(t) = -a p \delta_{0}(t)$, where $\delta_{0}$ is the Dirac distribution.
At this point we are vague concerning the function spaces which are mapped by $\timeOperator{T}(t)$ in a continuous way but postpone this to Section~\ref{sec:TraceOperatorsAndTraceSpaces}, where also a dissipative condition will be imposed on $\timeOperator{T}(t)$ (Assumption \ref{A1}).

\subsection{Retarded Potential Integral Equations}
\label{sec:RetardedPotentialIntegralEquations}
To formulate a RPIE we need trace operators.
For vector-valued functions $\vectorValued w$, sufficiently smooth in $\overline{\Omega_i}$, we define the normal component trace by
\begin{equation}
  \label{conormalcomp}
  \traceNormal{i} \vectorValued{w} \coloneqq \left\langle \normalN_i, \restrict{\vectorValued{w}}_{\Gamma_i} \right\rangle,\qquad i \in \{0,1,2\}
\end{equation}
where for $\vectorValued v = \transpose{(v_1, v_2, v_3)}, \vectorValued w = \transpose{(w_1, w_2, w_3)} \in \mathbb{C}^3$ we set $\left\langle \boldsymbol{v},\boldsymbol{w}\right\rangle \coloneqq\sum_{j=1}^{3}v_{j}w_{j}$ (without complex conjugation) and the unit normal vector $n_i$ points outside $\Omega_i$.

For $u$ sufficiently regular in $\overline{\Omega_i}$ and $a$ as in (\ref{defatau}), the Dirichlet (D) and Neumann (N) trace operators are denoted by $\traceD{i}, \traceN{i}$ and are given by
\begin{equation}
  \label{deftraces}
  \begin{aligned}
    \traceD{i} u \coloneqq & \left( \restrict{u}_{\Omega_i} \right)\vert_{\Gamma_i}, & \traceN{i} u \coloneqq & \traceNormal{i} \left( a_i^2 \gradient \restrict{u}_{\Omega_i} \right),
  \end{aligned}
\end{equation}
where the index $i \in \{0,1,2\}$ indicates that the limit is taken from the subdomain $\Omega_i$.
We also need a notation for the case where the limit of a function $u$ regular enough in the complement $\Omega_i^c \coloneqq \mathbb{R}^3 \setminus \Omega_i$ is taken from outside $\Omega_i$ (and the unit normal $\normalN_i$ still points outside $\Omega_{i}$):
\begin{equation*}
  \begin{aligned}
    \complementTrace{\traceD{i}} u \coloneqq & \left( u|_{\Omega_i^c} \right)\vert_{\Gamma_i}, & \complementTrace{\traceN{i}} u \coloneqq & \traceNormal{i} \left( a_i^2 \gradient \restrict{u}_{\Omega_i^c} \right).
  \end{aligned}
\end{equation*}

Each part of the skeleton $\Sigma$ is endowed with an intrinsic orientation.
We introduce (for $j=1,2$) the orientation functions $\mathfrak{N}_j : \Gamma_j \to \{-1,1\}$ to take into account its compatibility with the induced orientations on $\Gamma_j$:
\begin{equation}
  \label{eq:orientationSigma}
  \mathfrak{N}_j(x) = \left\langle \normalN_j(x), \normalN_{\Sigma}(x) \right\rangle \text{ for all } x \in \Gamma_j.
\end{equation}
However we assume that $\normalN_{\Sigma}$ always points outside $\Omega$ on $\partial \Omega$.

At this point we can define, for $u$ regular enough in $\Omega$,
\begin{equation*}
  \traceDFull u \coloneqq \restrict{\restrict{u}_{\Omega}}_{\Sigma}, \qquad \traceNFull u \coloneqq \left\langle \normalN_{\Sigma}, \restrict{a^2 \gradient \restrict{u}_{\Omega}}_{\Sigma} \right\rangle.
\end{equation*}
Finally, we will use the same symbols for the continuous extensions of the trace operators to appropriate Sobolev spaces.

We also need the potential $\timeOperator{G}_i$: for $\vectorValued \varphi = \transpose{(\varphi_{\dataDirichlet},\varphi_{\dataNeumann})} \in \XTraces{i}$
\begin{equation*}
  ( \timeOperator{G}_i * \vectorValued \varphi )(t,x) \coloneqq \int_{0}^{t} \int_{\Omega_i} k_i (t-\tau, x-y) \varphi_{\dataNeumann} - \gamma_{\dataNeumann,i;y} k_i (t-\tau, x-y) \varphi_{\dataDirichlet} \de y \de \tau,
\end{equation*}
where $\gamma_{\dataNeumann,i;y}$ denotes the co-normal derivative with respect to the $y$-variable;
for $i\in\{1,2\}$ the kernel function $k_{i}$ and the (Cauchy-trace) space
$\boldsymbol{X}_{i}$ will be defined in (\ref{eq:fundamentalSolution}) and
(\ref{eq:multiSpaces}).

Kirchhoff's representation formula then gives for the solution $u$ of (\ref{trans_probl}) (recall that for $i \in \{1,2\}$, $u_i \coloneqq \restrict{u}_{\Omega_i}$)
\[
u_{i}=\mathcal{G}_{i}\ast\boldsymbol{\gamma}_{i}u_{i},
\]
and applying the trace operator on both sides leads to the Calderón identity%
\[
\boldsymbol{\gamma}_{i}u_{i}=\boldsymbol{\gamma}_{i}\mathcal{G}_{i}%
\ast\boldsymbol{\gamma}_{i}u_{i}.
\]
By inserting the initial and boundary data (\ref{trans_probl:D},\ref{trans_probl:N}) and the equation (\ref{trans_probl:I}) one ends up with a system of \emph{retarded potential boundary integral equations} for the unknown Cauchy data of the boundary $\partial\Omega$ and interface $\Gamma_{\dataJump}$:
\begin{equation}
  \label{eq:rbieInitial}
  \left(
    \begin{bmatrix}
      \begin{matrix}
        -\mathcal{K}_{1} & \mathcal{V}_{1}\\
        \mathcal{W}_{1} & \mathcal{K}_{1}^{\prime}
      \end{matrix}
      & \\
      &
      \begin{matrix}
        -\mathcal{K}_{2} & \mathcal{V}_{2}\\
        \mathcal{W}_{2} & \mathcal{K}_{2}^{\prime}
      \end{matrix}
    \end{bmatrix}
    -\frac{\boldsymbol{\delta}_{0}}{2}\right) \ast
  \begin{pmatrix}
    \gamma_{\dataDirichlet,1}u\\
    \gamma_{\dataNeumann,1}u\\
    \gamma_{\dataDirichlet,2}u\\
    \gamma_{\dataNeumann,2}u
  \end{pmatrix} = \vectorValued{0},
\end{equation}
where the known boundary data are given by (\ref{trans_probl:D} -- \ref{trans_probl:N}), and incorporated as part of the unknown traces.
We emphasize that there are several ways to include boundary and jump conditions and we explain our approach via ``single trace spaces'' in Section~\ref{sec:TraceOperatorsAndTraceSpaces}.
Here, $\mathcal{V}_{i}$, $\mathcal{K}_{i}$, $\mathcal{K}_{i}^{\prime}$, $\mathcal{W}_{i}$ are scalar retarded potential integral operators (RPIOs) (defined in Section \ref{SecLPBIE}).

On the interface we have two sets of traces: those from $\Omega_{1}$ and those
from $\Omega_{2}$ and, in order to close this system of RPIEs, we supplement
it by the interface conditions (\ref{trans_probl:J})%
\begin{equation}
\lbrack u]_{\Gamma_{\dataJump}}=\left[  a^{2}\frac{\partial u}{\partial
n}\right]  _{\Gamma_{\dataJump}}=0\text{ on }\Gamma_{\dataJump}\times
\lbrack0,T].\label{trans_probl:Jrep}%
\end{equation}
In our approach, we will eliminate these coupling conditions by employing a
\textit{single-trace} ansatz (cf. \cite{ClHiptJH}) which automatically ensures
(\ref{trans_probl:Jrep}).

\subsection{Outline and main results}
\label{sec:OutlineAndMainResults}
For the analysis of the above system of RPIE (as well as for applying the
convolution quadrature for its numerical solution), these equations are
transformed to a system of \emph{integro-differential equations} in the
frequency domain. For this, equation (\ref{eq:rbieInitial}) is considered as a
convolution equation of the abstract form%
\begin{equation}
(\boldsymbol{\mathcal{O}}\ast\boldsymbol{\phi})(t)=\boldsymbol{r(t}%
),\quad\forall t\in\left[  0,T\right]  .\label{abstractconv}%
\end{equation}
The unknown function $\boldsymbol{\phi}:\left[  0,T\right]  \rightarrow
\boldsymbol{X}$ maps to an appropriate function space $\boldsymbol{X}$. If the
operator $\boldsymbol{\mathcal{O}}$ is replaced by the inverse Laplace
transform of its Laplace transform $\vectorValued{\laplaceOperator O}$:
\begin{equation*}
  \vectorValued{\timeFunction r}(t) = \int_0^t \frac{1}{2 \pi \imaginary} \int_{c + \imaginary \mathbb R} e^{s(t-\tau)} \vectorValued{\laplaceOperator O}(s) \de s \vectorValued{\timeFunction \phi}(\tau) \de \tau
  = \frac{1}{2 \pi \imaginary} \int_{c + \imaginary \mathbb R} \vectorValued{\laplaceOperator O}(s) \int_0^t e^{s(t-\tau)} \vectorValued{\timeFunction \phi}(\tau) \de \tau \de s,
\end{equation*}
the inner integral $\boldsymbol{z}(s;t)\coloneqq\int_{0}^{t}e^{s(t-\tau
)}\boldsymbol{\phi}(\tau)\mathop{}\!\mathrm{d}\tau$ is the solution of the
initial value problem
\[
\boldsymbol{\dot{y}}(t)=s\boldsymbol{y}(t)+\boldsymbol{\phi}\qquad
\boldsymbol{y}(0)=\boldsymbol{0}\quad\forall t\in\left[  0,T\right]  .
\]
The convolution equation can be reformulated as the following system for the unknown $\vectorValued{\phi}$ and the auxiliary function $\vectorValued{z}$, for some $\sigma_{0}>0$:
\begin{equation}
\left.
\begin{array}
[c]{c}%
\displaystyle\frac{1}{2\pi\mathrm{i}}\displaystyle\int_{\sigma_{0}+\operatorname*{i}\mathbb{R}%
}\boldsymbol{\mathsf{O}}\left(  s\right)  \boldsymbol{z}%
(s,t)\mathop{}\!\mathrm{d}s=\mathbf{r}\left(  t\right) \\
\partial_{t}\boldsymbol{z}(s;t)=s\boldsymbol{z}(s;t)+\boldsymbol{\phi
}(t),\ \boldsymbol{z}(s;0)=\boldsymbol{0}%
\end{array}
\right\}  \quad\forall t\in\left[  0,T\right]  ,\quad\forall s\in\sigma
_{0}+\operatorname*{i}\mathbb{R}\label{eq:integroDifferential}.
\end{equation}
The solution $\boldsymbol{\phi}$ of
(\ref{eq:integroDifferential}) is then also the solution of
(\ref{abstractconv}).

\begin{remark}
  \label{RemLaplTD}
  The analysis of the Laplace-transformed retarded potential integral operator (RPIO) is key for the analysis of the system of RPIE (\ref{eq:rbieInitial}) since well-posedness results can be transferred from the Laplace to the time domain via the Herglotz theorem, see \cite{banjai_coupling}.
  For the numerical discretization of (\ref{eq:integroDifferential}) by convolution quadrature, the starting point is the discretization of the ODE in (\ref{eq:integroDifferential}) by a time stepping method.
  Also here, the error analysis relies on frequency-explicit coercivity and continuity properties of the integral operator in the Laplace domain.
\end{remark}

\paragraph{Main results}
In this paper, we will derive a formulation of the wave transmission problem with mixed boundary conditions as a retarded potential integral equation for a single trace space of the form (\ref{abstractconv}) as well as an equivalent integro-differential equations of the form (\ref{eq:integroDifferential}).

Our main theoretical result is the proof of well-posedness of the RPIE (\ref{eq:rbieInitial} -- \ref{trans_probl:Jrep}).
This will be obtained by the methodology as explained in Remark \ref{RemLaplTD} by proving coercivity and continuity of the Laplace-transformed RPIO.

\paragraph{Organization of the paper}
Sections~\ref{SecLPBIE}--\ref{sec:dirichlet} are devoted to the derivation of the system of RPIEs;
the retarded acoustic single and double layer potentials are defined and the corresponding boundary integral operators are introduced by applying the trace and normal trace operator to these potentials.
We end up with a system of integral equations for the unknown Cauchy data.
Note that we employ a single-trace ansatz which involves single Cauchy data across the interface in accordance with the transmission conditions.

In Section~\ref{SecLinComb} we propose to incorporate the impedance boundary condition by keeping both Cauchy data in the equation.
The advantage of this approach is that only boundary integral operators are involved which are defined on closed surfaces.

In Section~\ref{sec:WellPosednessAnalysis} we will prove well-posedness of the system of integral equations by showing coercivity and continuity of this system of RPIEs.
This allows us to determine the analyticity class of the Laplace-transformed system and implies existence and uniqueness.

\section{Retarded Boundary Integral Equations for the Wave Transmission Problem\label{SpaceTimeBEM}}
After having sketched the approach we will detail here the operators, function spaces and Calderón identities, and formulate the wave transmission problem with mixed boundary conditions (\ref{trans_probl}) as a retarded boundary integral equation in variational form (see (\ref{eq:variationalTime:Tranmission_direct_form_ext})) for the unknown boundary traces.
This requires some preliminaries:
first, we introduce the relevant boundary integral operators (Section~\ref{SecLPBIE}).
We have chosen the \emph{direct} approach based on Kirchhoff's representation formula (Section~\ref{sec:representationFormula}, (\ref{CaldProjEq})) which involves the Calderón projector.
This operator is expressed in the Laplace domain by using the block operator $\vectorValued{\laplaceOperator{A}}(s)$ (see (\ref{defAsj})) which is also needed for the definition of the sesquilinear form in the variational formulation (\ref{eq:a0l0}).
In Section~\ref{sec:dirichlet} we incorporate the Dirichlet and Neumann boundary conditions and finally, in Section~\ref{SecLinComb}, we take into account the impedance-type condition.
The variational formulation of the RPIE in the Laplace domain is formulated as Problem~\ref{problem:mixedFormulation}  while the equation in the time domain is presented in (\ref{eq:variationalTime:Tranmission_direct_form_ext}).

\subsection{Background: Layer Potentials and Boundary Integral Operators\label{SecLPBIE}}

We recall retarded potentials on two-dimensional compact, orientable manifolds in $\mathbb{R}^{3}$ and start by introducing some notation.
We write $\Gamma_{i,S}\coloneqq\Gamma_{i}\cap\Gamma_{S}$ for $S \in \{\dataDirichlet,\dataNeumann,\dataImpedance,\dataJump\}$ i.e., the index $i \in \left\{ 0,1,2 \right\}$ corresponds to the domain $\Omega_{i}$ while $S$ indicates the type of boundary conditions imposed.

Recall the definition of $a$ as in (\ref{defatau}).
Let $u$ be a function in $\mathbb{R}^{3} \setminus \Sigma$;
for $j\in\{0,1,2\}$, we assume that the traces $\traceD{j},\traceN{j},\complementTrace{\traceD{j}},\complementTrace{\traceN{j}}$ applied to $u$ are well-defined.
Then the jump $\left[ \cdot \right]_{\dataDirichlet,j}$ and co-normal jump $\left[ \cdot \right]_{\dataNeumann,j}$ across $\Gamma_j$ are defined by
\begin{equation*}
  [u]_{\dataDirichlet,j} \coloneqq \complementTrace{\traceD{j}} u - \traceD{j} u \qquad \text{and} \qquad [u]_{\dataNeumann,j} \coloneqq \complementTrace{\traceN{j}} u - \traceN{j} u.
\end{equation*}
The \textit{averages} are defined by
\begin{equation*}
  \{u\}_{\dataDirichlet,j} \coloneqq \frac{1}{2} \left( \traceD{j} u + \complementTrace{\traceD{j}} u \right) \qquad \text{and} \qquad \{u\}_{\dataNeumann,j} \coloneqq \frac{1}{2} \left( \traceN{j} u + \complementTrace{\traceN{j}} u \right).
\end{equation*}
This allows us to introduce the following boundary integral operators.
The fundamental solution of the wave equation in $\mathbb{R}^{3}$, more precisely, for the operator $p_i^2 \partial_t^2 - a_i^2 \Delta$ is (see e.g., in the Laplace domain: \cite[p.~486, (18)]{Stratton_book}, \cite[Eq.~(2.10)]{sayas_book}; in cylindrical coordinates: \cite{friedman}):
\begin{equation}
  \label{eq:fundamentalSolution}
  k_{i}(t,z) \coloneqq \frac{\delta_0 \left( t - \frac{p_i}{a_i} \norm{z} \right)}{4\pi a_i^2 \norm{z}} \text{ for } z \in \mathbb R^3 \setminus \{0\}.
\end{equation}

Let the coefficient functions $a$, $p$ be as in (\ref{defatau}).
For functions $\varphi: [0,T] \times \Gamma_{i} \to\mathbb{C}$ and $\psi: [0,T] \times \Gamma_{i} \to \mathbb{C}$ we define the retarded acoustic single and double layer potentials for all $(t,x) \in [0,T] \times \left( \mathbb{R}^{3} \setminus\Gamma_{i} \right)$:
\begin{subequations}
  \label{defpots}
  \begin{align}
    (\timeOperator{S}_i \ast \varphi )(t,x) & \coloneqq \int_{\Gamma_i} (k_i (\cdot,\norm{x-y}) * \varphi(\cdot,y))(t) \de s_{y}=\int_{\Gamma_i} \frac{\varphi \left(t-\frac{p_i \norm{x-y}}{a_i}, y \right)}{4 \pi a_i^2 \norm{x-y}} \de s_y,\label{defpotsa} \\
    (\timeOperator{D}_i \ast \psi ) (t,x)   & \coloneqq\int_{\Gamma_i} \restrict{\gamma_{\dataNeumann,i;y} (k_i(\cdot, \norm{x-y}) * \psi(\cdot, z))(t)}_{z=y} \de s_{y}\nonumber                                                                               \\
                                            & =\int_{\Gamma_i} \restrict{\left(\gamma_{\dataNeumann;i;y}\frac{\psi\left( t - \frac{p_{i} \norm{x-y}}{a_{i}}, z \right)}{4\pi a_{i}^{2} \norm{x-y}}\right)}_{z=y} \de s_{y}.\label{defpotsb}
  \end{align}
\end{subequations}
In \cite[Eq.~(10)]{duong03} an explicit expression for the integrand of the double layer potential is provided.

These potentials give rise to the following boundary integral operators.
For functions $\varphi,\psi: [0,T] \times \Gamma_{j} \to \mathbb{C}$ we set
\begin{align*}
\mathcal{V} _{j} \ast\varphi &  \coloneqq\left\{  \mathcal{S} _{j} \ast
\varphi\right\} _{\dataDirichlet;j}, & \mathcal{K} _{j}\ast\psi &
\coloneqq \left\{  \mathcal{D} _{j} \ast\psi\right\} _{\dataDirichlet;j},\\
\mathcal{K} ^{\prime}_{j} \ast\varphi &  \coloneqq \left\{  \mathcal{S} _{j}
\ast\varphi\right\} _{\dataNeumann;j}, & \mathcal{W} _{j}\ast\psi &
\coloneqq-\left\{  \mathcal{D} _{j}\ast\psi\right\} _{\dataNeumann;j}%
\end{align*}
on $[0,T] \times \Gamma_{j}$.
For $j \in\{0,1,2\}$, it holds almost everywhere on $[0,T] \times \Gamma_{j}$
\begin{align*}
\gamma_{\dataDirichlet, j} (\mathcal{S} _{j} * \varphi)  &  = \mathcal{V} _{j}
* \varphi, & \gamma_{\dataNeumann, j} (\mathcal{S} _{j} * \varphi)  &  =
\left(  \mathcal{K} _{j}^{\prime} + \frac{\delta_{0}}{2} \right)  * \varphi,\\
\gamma_{\dataDirichlet, j} (\mathcal{D} _{j} * \psi)  &  = \left(  \mathcal{K}
_{j} - \frac{\delta_{0}}{2} \right)  * \psi, & \gamma_{\dataNeumann, j}
(\mathcal{D} _{j} * \psi)  &  = - \mathcal{W} _{j} * \psi.
\end{align*}

For $\kappa\in\mathbb{R}$, let%
\[
\mathbb{C}_{\kappa}\coloneqq\left\{  s\in\mathbb{C}\mid\real s>\kappa\right\}
.
\]

\begin{convention}
Throughout this paper, $\sigma_{0}>0$ denotes a fixed positive constant. The
constants in the estimates in this paper will depend continuously on
$\sigma_{0}\in\mathbb{R}_{>0}$ and $a_{1},a_{2},p_{1},p_{2}\in\mathbb{R}_{>0}$
in (\ref{defatau}). These constants, possibly, tend to infinity if one or more
of the quantities $\sigma_{0}$, $a_{1}$, $a_{2}$, $p_{1}$, $p_{2}$ tend to
zero or infinity. We will suppress this dependence in our notation.
\end{convention}

We employ the convention that, if the two functions $\timeFunction \varphi$ and $\laplaceFunction{\varphi}$ appear in the same context, then the latter is the Laplace transform of the former.
We recall the formal definition of the Laplace transform $\mathcal{L}$ and its inverse $\mathcal{L}^{-1}$ by
\begin{equation*}
  \laplaceFunction q (s) \coloneqq \left( \mathcal{L} q \right)(s) = \int_{0}^{\infty} \operatorname{e}^{-st} q(t) \de t \qquad \text{and} \qquad
  q(t) = \left( \mathcal{L}^{-1} \laplaceFunction{q} \right) (t) = \frac{1}{2 \pi \imaginary} \int_{\sigma_0 + \imaginary \mathbb{R}} \operatorname{e}^{st} \laplaceFunction{q}(s) \de s.
\end{equation*}
For the convolution quadrature, we apply the Laplace transform with respect to
time and obtain operators in the frequency variable $s \in\mathbb{C}_{0}$.
Thus, we end up with the Laplace transformed potentials for $(s,x) \in \mathbb{C}_{0} \times \mathbb{R}^{3} \setminus\overline{\Gamma_{i}}$ and $i\in\{0,1,2\}$:
\begin{subequations}
\label{freq_pot}%
\begin{align}
\left(  \mathsf{S} _{i} (s) \varphi\right)  (x)  &  \coloneqq \int_{\Gamma
_{i}} \hat{k}_{i} (s, x-y) \varphi(y) \mathop{}\!\mathrm{d} s_{y}
\label{freq_pota},\\
\left(  \mathsf{D} _{i} (s) \psi\right)  (x)  &  \coloneqq \int_{\Gamma_{i}}
\left(  \gamma_{\dataNeumann;i;y} \hat{k}_{i} (s,x-y) \right)  \psi(y)
\mathop{}\!\mathrm{d} s_{y},\label{freq_potb}%
\end{align}
for
\end{subequations}
\begin{equation*}
  \laplaceFunction k_i (s,z) \coloneqq \frac{\exp \left( -s \frac{p_i \norm{z}}{a_i} \right)}{4 \pi a_i^2 \norm{z}},\ z \in \mathbb R^3 \setminus \{0\}
\end{equation*}
and corresponding boundary integral operators on $\Gamma_{j}$ given for $s
\in\mathbb{C}_{0}$ by
\begin{align*}
\mathsf{V} _{j} (s) \varphi &  \coloneqq \left\{  \mathsf{S} _{j} (s)
\varphi\right\} _{\dataDirichlet;j}, & \mathsf{K} _{j} (s) \psi &
\coloneqq \left\{ \mathsf{D} _{j} (s) \psi\right\} _{\dataDirichlet;j},\\
\mathsf{K} _{j}^{\prime}(s) \varphi &  \coloneqq \left\{  \mathsf{S} _{j} (s)
\varphi\right\} _{\dataNeumann;j}, & \mathsf{W} _{j} (s) \psi &
\coloneqq -\left\{ \mathsf{D} _{j} (s) \psi\right\} _{\dataNeumann;j}.
\end{align*}

Note that the Laplace transform $\mathcal{L}$ applied to the convolution potentials satisfies
\[
\mathcal{L} \left(  \mathcal{S} _{i} \ast\varphi\right)  (s) = \mathsf{S} _{i}
(s) \hat{\varphi} (s),\qquad\mathcal{L} \left(  \mathcal{D} _{i} \ast
\psi\right)  (s) = \mathsf{D} _{i} (s) \hat{\psi} (s)
\]
and analogous relations hold for the boundary integral operators in the time
and Laplace domain. It is also well known that the following jump relations
hold (see \cite[Section~1.3]{sayas_book}):
\begin{equation}
\label{jump_rel_Lapl}\begin{aligned} \left[ \ensuremath{\mathsf{S}}_j (s) \varphi \right]_{\dataDirichlet;j} & = 0, & \left[ \ensuremath{\mathsf{S}}_j (s) \varphi \right]_{\dataNeumann;j} & = -\varphi, \\ \left[ \ensuremath{\mathsf{D}}_j (s) \psi \right]_{\dataDirichlet;j} & = \psi, & \left[ \ensuremath{\mathsf{D}}_j (s) \psi \right]_{\dataNeumann;j} & = 0. \end{aligned}
\end{equation}

\subsection{Representation Formula}
\label{sec:representationFormula}
\subsubsection{Sobolev Spaces}
First, we introduce Sobolev spaces in domains and on manifolds -- standard
references are \cite{Adams_new}, \cite{LionsMagenesI}. Let $\Omega
\subset\mathbb{R}^{3}$ be a bounded Lipschitz domain with boundary $\Gamma$.
The unit normal vector field $n$ on $\Gamma$ is chosen to point into the
exterior of $\Omega$ and exists almost everywhere. We denote the $L^{2}%
(\Omega)$-scalar product and norm by
\begin{equation*}
  (u,v)_{\Omega} \coloneqq \int_{\Omega} u(x) \overline{v}(x) \de x \qquad \text{and} \qquad \norm{u}_{\Omega}\coloneqq(u,u)_{\Omega}^{1/2},
\end{equation*}
and suppress the subscript $\Omega$ if the domain is clear from the context.
For $\alpha\in\mathbb{R}_{\geq0}$, let $H^{\alpha}\left(  \Omega\right)  $
denote the usual Sobolev space with norm $\norm{\cdot}_{H^{\alpha}\left(
\Omega\right)  }$ and $H_{0}^{\alpha}\left(  \Omega\right)  $ is the closure
of $C_{0}^{\infty}\left(  \Omega\right)  \coloneqq\left\{  u\in C^{\infty
}\left(  \Omega\right)  \mid\operatorname*{supp}u\subset\Omega\right\}  $ with
respect to the $\norm{\cdot}_{H^{\alpha}\left(  \Omega\right)  }$ norm. Its
dual space is denoted by $H^{-\alpha}\left(  \Omega\right)  \coloneqq\left(
  H_{0}^{\alpha}\left(  \Omega\right)  \right)  ^{\prime}$.
On the boundary $\Gamma$, we define the Sobolev space $H^{\alpha}(\Gamma)$, $\alpha\geq0$, in the usual way.
Note that the range of $\alpha$ for which $H^{\alpha}(\Gamma)$ is defined may be limited, depending on the global smoothness of the surface $\Gamma$;
for Lipschitz surfaces, $\alpha$ can be chosen in the range $\left[  0,1\right]  $;
for $\alpha<0$, the space $H^{\alpha}(\Gamma)$ is the dual of $H^{-\alpha}\left(  \Gamma\right)$ (see, e.g., \cite[p.~98]{Mclean00}).

We define, for $R,S\in\{\dataDirichlet,\dataNeumann,\dataImpedance\},R\neq S$, the Sobolev spaces
\begin{align}
  H^{\pm 1/2}(\Gamma_R) \coloneqq             & \left\{ \restrict{\phi}_{\Gamma_R} \text{ such that } \phi \in H^{\pm 1/2} (\Sigma) \right\},\label{eq:noTilde} \\
  \tilde{H}_{S}^{\pm 1/2}(\Gamma_R) \coloneqq & \left\{ \restrict{\phi}_{\Gamma_R} \text{ such that } \phi \in H^{\pm 1/2} (\Sigma) \text{ and } \restrict{\phi}_{\Gamma_S} = 0 \right\}.\label{eq:halfTilde}
\end{align}

We denote by $\left\langle \cdot,\cdot\right\rangle _{\Gamma_{j}}$ the dual
pairing between $H^{1/2}\left(  \Gamma_{j}\right)  $ and $H^{-1/2}\left(
\Gamma_{j}\right)  $ (without complex conjugation) so that $\left\langle
u,\overline{v}\right\rangle _{\Gamma_{j}}$ is the continuous extension of the
$L^{2}\left(  \Gamma_{j}\right)  $ scalar product. We can thus introduce the
symmetric and skew-symmetric dual pairing:
for $j=1,2$ and $\vectorValued{\phi} = \transpose{(\phi_{\dataDirichlet},\phi_{\dataNeumann})}, \vectorValued{\psi} = \transpose{(\psi_{\dataDirichlet},\psi_{\dataNeumann})} \in H^{1/2}(\Gamma_j) \times H^{-1/2}(\Gamma_j)$
\begin{subequations}
  \label{eq:pairings}
  \begin{align}
    \left\langle \vectorValued{\phi}, \vectorValued{\psi} \right\rangle_{\Gamma_j}^+ & \coloneqq \left\langle \phi_{\dataDirichlet},\psi_{\dataNeumann} \right\rangle_{\Gamma_j} + \left\langle \phi_{\dataNeumann}, \psi_{\dataDirichlet}\right\rangle_{\Gamma_j}, \label{eq:sympairing}\\
    \left\langle \vectorValued{\phi}, \vectorValued{\psi} \right\rangle_{\Gamma_j}^- & \coloneqq \left\langle \phi_{\dataDirichlet},\psi_{\dataNeumann} \right\rangle_{\Gamma_j} - \left\langle \phi_{\dataNeumann}, \psi_{\dataDirichlet}\right\rangle_{\Gamma_j}. \label{eq:skewsympairing}
  \end{align}
\end{subequations}

\subsubsection{Trace Operators and Trace Spaces}
\label{sec:TraceOperatorsAndTraceSpaces}

Note that the trace operators $\gamma_{\dataDirichlet;i},\gamma
_{\dataNeumann;i}$ in (\ref{deftraces}) can be extended to continuous
operators acting on functions in the Sobolev space $H\left(  \Delta,\Omega
_{i}\right)  \coloneqq\left\{  u\in H^{1}\left(  \Omega_{i}\right)  \mid\Delta
u\in L^{2}\left(  \Omega_{i}\right)  \right\}  $. We collect the range of
these traces into the space of Cauchy traces, and the \emph{multi-trace} space:
\begin{equation}
  \label{eq:multiSpaces}
  \begin{aligned}
    \XTraces{i} \coloneqq H^{1/2} \left( \Gamma_i \right) \times H^{-1/2} \left( \Gamma_i \right) \text{ for } i \in \{1,2\} \qquad \text{and} \qquad \XMulti \coloneqq \XTraces{1} \times \XTraces{2},
  \end{aligned}
\end{equation}
and equip these spaces with the graph norm:
\begin{align*}
  \norm{\vectorValued \phi_i}_{\XTraces{i}} & \coloneqq \left(\norm{\phi_{i,\dataDirichlet}}_{H^{1/2} \left( \Gamma_i \right)}^2 + \norm{\phi_{i,\dataNeumann}}_{H^{-1/2} \left( \Gamma_i \right)}^2 \right)^{1/2} & \text{for } \vectorValued \phi_i = \left(\phi_{i,\dataDirichlet}, \phi_{i,\dataNeumann} \right)  \in \XTraces{i},\\
  \norm{\vectorValued \phi}_{\XMulti} & \coloneqq \sqrt{\norm{\vectorValued \phi_1}_{\XTraces{1}}^2 + \norm{\vectorValued \phi_2}_{\XTraces{2}}^2} & \text{for } \vectorValued{\phi} = \left( \vectorValued{\phi}_1, \vectorValued{\phi}_2 \right) \in \XMulti.
\end{align*}

The \textit{single trace space} is a subspace of $\XMulti$ and defined by
\begin{equation}
  \XSingle \coloneqq \left\{ \left(
      \begin{pmatrix}
        \phi_{i,\dataDirichlet}\\
        \phi_{i,\dataNeumann}
      \end{pmatrix}
    \right)_{i=1,2} \in \XMulti \mid \exists
\begin{pmatrix}
v\in H^{1}\left(  \mathbb{R}^{3}\right) \\
\mathbf{w}\in\vectorValued{H}\left(  \mathbb{R}^{3},\divergence\right)
\end{pmatrix}
,\forall i=1,2%
\begin{pmatrix}
\phi_{i,\dataDirichlet}=\gamma_{\dataDirichlet;i}v\\
\phi_{i,\dataNeumann}=\gamma_{\normalN;i}\mathbf{w}%
\end{pmatrix}
\right\},  \label{DefXsinglespace}%
\end{equation}
where the components of $\vectorValued \phi_i$ are denoted by $\phi_{i,\dataDirichlet}$, $\phi_{i,\dataNeumann}$;
the space $\vectorValued{H}\left(  \mathbb{R}^{3},\divergence\right)$ is defined e.g., in \cite[p.~26]{Girault86}.

The corresponding Cauchy trace operators are given by
\[
\boldsymbol{\gamma}_{i}:H\left(  \Delta,\Omega_{i}\right)  \rightarrow
\boldsymbol{X}_{i},\quad\boldsymbol{\gamma}_{i}(v)=\left(  \gamma
_{\dataDirichlet,i}v,\gamma_{\dataNeumann,i}v\right)  ^{\top},
\]%
\[
\traceFull=\left(  \boldsymbol{\gamma}_{1}%
,\boldsymbol{\gamma}_{2}\right)  :H\left(  \Delta,\Omega_{1}\right)  \times
H\left(  \Delta,\Omega_{2}\right)  \rightarrow\boldsymbol{X}%
^{\operatorname{mult}}.
\]
It is known from \cite[Lem. 3.5]{CostabelElemRes} that the range of
$\boldsymbol{\gamma}_{i}$ is dense in $\boldsymbol{X}_{i}$. Since the spaces
$H^{1/2}\left(  \Gamma_{i}\right)  $ and $H^{-1/2}\left(  \Gamma_{i}\right)  $
are dual to each other, we have that the Cauchy trace spaces are in
self-duality with respect to the symmetric dual pairing $\left\langle
\cdot,\cdot\right\rangle _{\Gamma_{i}}^{+}$.

In the context of the wave equation, these (spatial) trace spaces are
considered as spaces of values of time-depending functions (distributions). To
define the relevant function space we first consider the Schwartz class
\[
\mathfrak{S}\left(  \mathbb{R}\right)  \coloneqq\left\{  \varphi\in C^{\infty
}\left(  \mathbb{R}\right)  \mid\forall k\in\mathbb{N}_{0},\quad\forall
p\in\mathbb{P}\left(  \mathbb{R}\right)  :\quad p\varphi^{\left(  k\right)
}\in L^{\infty}\left(  \mathbb{R}\right)  \right\}  ,
\]
where $\mathbb{P}\left(  \mathbb{R}\right)  $ denotes the space of polynomials
(with complex coefficients). $\mathfrak{S}\left(  \mathbb{R}\right)  $ can be
equipped with a metric that makes this space complete. A tempered distribution
with values in a Banach space $X$ is a continuous linear map $f:\mathfrak{S}%
\left(  \mathbb{R}\right)  \rightarrow X$. A \textit{causal tempered distribution with
values in }$X$ is a tempered $X$-valued distribution such that%
\begin{equation*}
  f (\varphi) = 0 \qquad \forall \varphi \in \mathfrak{S} (\mathbb{R}) \text{ such that } \operatorname*{supp} \varphi \subset \intervalOO{-\infty,0},
\end{equation*}
and following the notation in \cite{sayas_book} we write
\begin{equation*}
  f \in \operatorname*{CT} (X), \quad \operatorname*{CT}(X): \text{ space of causal tempered distributions with values in } X.
\end{equation*}

\begin{definition}
The space\footnote{$\operatorname*{TD}$ for \textquotedblleft time
domain\textquotedblright.} $\operatorname*{TD}\left(  X\right)  $ consists of
all (possibly distributional) derivatives of continuous causal $X$-valued
functions with, at most, polynomial growth.
\end{definition}

We employ the \emph{direct method} to transform the wave equation into a
space-time boundary integral equation and start with the Kirchhoff
representation formula. The key potential is given by
\[
\left(  \mathcal{G}_{i}\boldsymbol{\phi}\right)  (t,x)\coloneqq\int_{0}%
^{t}\left\langle \boldsymbol{\gamma}_{i}k_{i}(t-\tau,x-\cdot),\boldsymbol{\phi
}(\tau)\right\rangle _{\Gamma_{i}}^{-}\mathop{}\!\mathrm{d}\tau
\]
for $\boldsymbol{\phi}\in\operatorname*{TD}\left(  \boldsymbol{X}_{i}\right)
$ and $k_{i}$ as in (\ref{eq:fundamentalSolution}).

Then, every $u_i \in \operatorname*{TD} \left( H^1 \left( \Delta,\Omega_i \right) \right)$ that satisfies $p_i^2 \partial_{t}^2 u_i - a_i^2 \Delta u_i = 0$ and $u_i(0) = \partial_{t} u_i(0) = 0$ also satisfies the representation formula
(see \cite[Prop.~3.5.1]{sayas_book})
\[
u_{i}=\mathcal{G}_{i}\ast\boldsymbol{\gamma}_{i}u_{i}.
\]
We introduce the \emph{Calder\'{o}n projector} $\boldsymbol{\mathcal{P}}%
_{i}(t):\boldsymbol{X}_{i}\rightarrow\boldsymbol{X}_{i}$ by
\[
\boldsymbol{\mathcal{P}}_{i}(t)\coloneqq\boldsymbol{\gamma}_{i}\mathcal{G}%
_{i}(t).
\]
$u_{i}\in\operatorname*{TD}\left(  H^{1}\left(
\Delta,\Omega_{i}\right)  \right)  $ solves the homogeneous wave equation
$p_{i}^{2}\partial_{t}^{2}u_{i}-a_{i}^{2}\Delta u_{i}=0$ in $\Omega_{i}$ and
$u_{i}(0)=\partial_{t}u_{i}(0)=0$, if and only if
(\cite[Section~3.5]{sayas_book})
\begin{equation}
\left(  \boldsymbol{\mathcal{P}}_{i}(\cdot)-\delta_{0}\right)  \ast
\boldsymbol{\gamma}_{i}u_{i}(\cdot)=0.\label{CaldProjEq}%
\end{equation}
This equation will be our starting point for the formulation of problem
(\ref{trans_probl}) as a system of integral equations. Next we transform this
equation to the Laplace domain; cf. Remark \ref{RemLaplTD}. The Laplace
transform of (\ref{CaldProjEq}) is given by
\begin{equation}
\left(  \boldsymbol{\mathsf{P}}_{i}(s)-\boldsymbol{Id}\right)
\boldsymbol{\gamma}_{i}\hat{u}_{i}(s)=0,\label{caldprojLapl}%
\end{equation}
where $\boldsymbol{Id}$ denotes the identity operator and
\begin{equation*}
  \vectorValued{\laplaceOperator{P}}_i \coloneqq \trace{i} \laplaceOperator G_i \quad \text{with} \quad
  \laplaceOperator G_i (\vectorValued{\laplaceFunction{\phi}}(s);s,x) \coloneqq \left\langle \trace{i} \laplaceFunction{k}_i (s, x - \cdot), \vectorValued{\laplaceFunction{\phi}}(s) \right\rangle_{\Gamma_i}^- \text{ for }
  \vectorValued{\laplaceFunction{\phi}}(s) \in \XTraces{i}, s \in \mathbb C_{\sigma_0}.
\end{equation*}
The operator $\boldsymbol{\mathsf{P}}_{i}(s)-\frac{\boldsymbol{Id}%
}{2}$ is denoted as the \emph{Calderón operator}. It turns out, that this
operator is not optimally scaled in terms of the frequency variable $s$ for
its stability analysis. We employ a further transformation and introduce the
frequency dependent diagonal matrix and frequency-weighted trace operators%
\begin{equation*}
  \vectorValued{\laplaceOperator{D}}(s) \coloneqq \diag \left[ s^{1/2}, s^{-1/2} \right] \in\mathbb{C}^{2\times2};
  \traceScaled{i} \coloneqq \vectorValued{\laplaceOperator{D}}(s) \trace{i} = \transpose{( s^{1/2} \traceD{i}, s^{-1/2}\traceN{i})} \text{ and }
  \traceFullScaled \coloneqq \diag\left( \vectorValued{\laplaceOperator{D}}(s), \vectorValued{\laplaceOperator{D}}(s) \right)  \traceFull.
\end{equation*}
This allows us to define the scaled version of the block Calder\'{o}n operator
$\boldsymbol{\mathsf{A}}(s)\coloneqq\diag(\boldsymbol{\mathsf{A}}%
_{1}(s),\boldsymbol{\mathsf{A}}_{2}(s))$, with%
\begin{equation}
\boldsymbol{\mathsf{A}}_{i}(s)\coloneqq\boldsymbol{\mathsf{D}}(s)\left(
\boldsymbol{\mathsf{P}}_{i}(s)-\frac{\boldsymbol{Id}}{2}\right)
\boldsymbol{\mathsf{D}}^{-1}(s)\coloneqq%
\begin{bmatrix}
-\mathsf{K}_{i}(s) & s\mathsf{V}_{i}(s)\\
\frac{1}{s}\mathsf{W}_{i}(s) & \mathsf{K}_{i}^{\prime}(s)
\end{bmatrix}
\quad\text{for }i=1,2.\label{defAsj}%
\end{equation}
Then, (\ref{caldprojLapl}) can be written in the form
\[
\left(  \boldsymbol{\mathsf{A}}(s)-\frac{\boldsymbol{Id}}{2}\right)
\traceFullScaled\hat{u}=0.
\]

We will also need the following assumption on $\timeOperator T$.
\begin{assumption}
  \label{A1}
  The operator $\timeOperator{T}(t)$ in (\ref{trans_probl:I}) and (\ref{eq:urelations}) is the inverse Laplace transform of a bounded linear \emph{transfer operator} $\laplaceOperator{T}(s) : \tilde H_{\dataDirichlet}^{1/2} \left(\Gamma_{\dataImpedance}\right) \to \tilde H_{\dataNeumann}^{-1/2} \left(\Gamma_{\dataImpedance} \right)$ depending analytically on $s\in\mathbb{C}_{0}$, more precisely
  \begin{equation*}
    (\timeOperator{T} \ast \varphi)(t) = \mathcal{L}^{-1} \left(\laplaceOperator{T} \hat{\varphi}\right)(s)
  \end{equation*}
  for any function $\varphi \in \operatorname*{TD} \left(\tilde H_{\dataDirichlet}^{1/2}(\Gamma_{\dataImpedance})\right)$;
  $\laplaceOperator{T}(s)$ satisfies the following (dissipative) sign property:
  \begin{equation}
    \real \left\langle \laplaceOperator{T}(s) \laplaceFunction{\varphi} , \overline{\laplaceFunction{\varphi}} \right\rangle_{\Gamma_{\dataImpedance}} \leq 0 \quad
    \forall \laplaceFunction{\varphi} \in \tilde H_{\dataDirichlet}^{1/2}(\Gamma_{\dataImpedance})
.\label{signcond}%
\end{equation}
\end{assumption}

The following duality holds (the proof is a slight generalization of the well-known duality of $\tilde{H}^{1/2}(\Gamma)$ and $H^{-1/2}(\Gamma)$, which can be found e.g. in \cite[Theorem~3.14]{Mclean00})
\begin{equation*}
  \tilde{H}_{\dataDirichlet}^{\pm 1/2}(\Gamma_{\dataImpedance}) = \left(\tilde{H}_{\dataNeumann}^{\mp 1/2}(\Gamma_{\dataImpedance}) \right)'.
\end{equation*}
\begin{remark}
  If the transfer operator $\laplaceOperator{T}$, in the case of impedance boundary condition, is given by minus identity, $\laplaceOperator{T} = - \identity$, then Assumption~\ref{A1} is satisfied trivially since
  $\tilde H^{1/2}_{\dataDirichlet} \left( \Gamma_{\dataImpedance} \right)  \subseteq L^{2} \left( \Gamma_{\dataImpedance} \right) \subseteq \tilde H^{-1/2}_{\dataNeumann} \left( \Gamma_{\dataImpedance} \right)$.

  If $\laplaceOperator{T}_0(s)$ denotes the standard $\operatorname*{DtN}$ operator on $\partial\Omega$, one could define $\laplaceOperator{T}(s) \coloneqq Z^{\prime} \laplaceOperator{T}_0(s) Z$, where $Z:\tilde H_{\dataDirichlet}^{1/2}\left( \Gamma_{\dataImpedance} \right) \to H^{1/2} \left( \Gamma_0 \right) $ is a linear and bounded extension operator, e.g., the minimal $H^{1/2} \left( \Gamma_0 \right)$ extension and the projection $Z' \colon H^{-1/2} \left(\Gamma_0 \right) \to \tilde H_{\dataNeumann}^{-1/2} \left( \Gamma_{\dataImpedance} \right)$ is its dual.
  The sign condition then is inherited from the well-known sign property (see \cite[Eq.~(2.6.93)]{Nedelec01}) of $\laplaceOperator{T}_0(s)$ via
  \begin{equation*}
    \real \left\langle \laplaceOperator{T}(s) \laplaceFunction{\varphi}, \laplaceFunction{\varphi} \right\rangle
_{\Gamma_{\dataImpedance}} = \real \left\langle \mathsf{T} _{0} \laplaceFunction{\varphi
}^{\operatorname{ext}}_{0}, \laplaceFunction{\varphi}^{\operatorname{ext}}_{0}
\right\rangle _{\Gamma_{0}} \leq0 \quad\forall\laplaceFunction{\varphi} \in
H_{\dataDirichlet}^{1/2} \left(  \Gamma_{\dataImpedance} \right) ,
\end{equation*}
where $\laplaceFunction{\varphi}^{\operatorname{ext}}_{0}$ denotes the extension of
$\laplaceFunction{\varphi} \in\tilde H^{1/2}_{D} \left(  \Gamma_{\dataImpedance} \right) $ to
$\Gamma_{0}$ by zero.
\end{remark}

To deal with problem (\ref{trans_probl}) we incorporate Dirichlet and Neumann boundary conditions into the space $\mathbf{X}^{\operatorname{single}}$.
For this we extend the Dirichlet part $\Gamma_{\dataDirichlet}$ to a closed boundary (see Fig.~\ref{domain}) of a bounded domain $\Omega_{Z_{\dataDirichlet}}\subset\Omega_0$ (i.e., $\Omega_{Z_{\dataDirichlet}}$ lies outside the domain $\Omega$ where the problem is defined) such that $\partial \Omega_{Z_{\dataDirichlet}} \cap \Sigma = \Gamma_{\dataDirichlet}$.
We extend the Neumann part $\Gamma_{\dataNeumann}$ in the same way and obtain $\Omega_{Z_{\dataNeumann}}$.
Then we set

\begin{subequations}
  \label{eq:PartialZeroExtension}
  \begin{align}
    H_{\dataDirichlet}^1 \left( \mathbb R^3 \right)                        & \coloneqq \left\{ v \in H^1 \left( \mathbb R^3 \setminus \overline{\Omega_{Z_{\dataDirichlet}}} \right) \mid \restrict{v}_{\partial \Omega_{Z_{\dataDirichlet}}} = 0 \right\}, \label{eq:PartialZeroExtension:d} \\
    \vectorValued H_{\dataNeumann} \left( \mathbb R^3, \divergence \right) & \coloneqq \left\{ \vectorValued w \in \vectorValued H \left( \mathbb{R}^3 \setminus \overline{\Omega_{Z_{\dataNeumann}}}, \divergence \right) \mid  \left\langle \normalN_{{Z_{\dataNeumann}}}, \restrict{\vectorValued w}_{\partial \Omega_{Z_{\dataNeumann}}}\right\rangle = 0 \right\}, \label{eq:PartialZeroExtension:n}
  \end{align}
\end{subequations}
and define the space of Cauchy traces of global fields whose Dirichlet and Neumann components vanish on $\Gamma_{\dataDirichlet}$ and $\Gamma_{\dataNeumann}$ respectively;
this space naturally arises when offsetting Cauchy traces with the boundary data (see Section~\ref{sec:dirichlet}).

\begin{equation}
  \label{DefXsinglespaceD}
  \XSingleZero \coloneqq \left\{ \left(
      \begin{pmatrix}
        \phi_{i,\dataDirichlet}\\
        \phi_{i,\dataNeumann}
      \end{pmatrix}
    \right)_{i=1,2} \in \XSingle \mid \exists
    \begin{pmatrix}
      v \in H_{\dataDirichlet}^{1} \left( \mathbb{R}^{3} \right) \\
      \vectorValued{w} \in \vectorValued{H}_{\dataNeumann} \left( \mathbb{R}^{3},\divergence \right)
    \end{pmatrix}
    \forall i=1,2:
    \begin{pmatrix}
      \phi_{i,\dataDirichlet} = \traceD{i} v\\
      \phi_{i,\dataNeumann} = \traceNormal{i} \vectorValued{w}
    \end{pmatrix}
  \right\}.
\end{equation}

\subsection{Treatment of the Neumann and Dirichlet boundary conditions\label{sec:dirichlet}}

Now the transmission conditions (\ref{trans_probl:J}) are built into the function space $\mathbf{X}^{\operatorname{single}}$; we take into account the boundary conditions on $\Gamma_{\dataDirichlet}$ and $\Gamma_{\dataNeumann}$ next.

To obtain a variational formulation for the unknown Cauchy data of the transmission problem (\ref{trans_probl}) with balanced test and trial spaces we consider an offset function $\laplaceFunction b = \laplaceFunction b(s) \in H^1(\Delta,\Omega)$ such that
\begin{equation*}
  \left( \traceDFull \laplaceFunction b \right)|_{\Gamma_{\dataDirichlet}} = \laplaceFunction g_{\dataDirichlet}, \quad
  \left(\traceNFull \laplaceFunction b \right)|_{\Gamma_{\dataNeumann}}   = \laplaceFunction d_{\dataNeumann}.
\end{equation*}
In the simplest case, the function $\laplaceFunction{b} \in H^1(\Delta,\Omega)$ is \emph{given} and the boundary data $\timeFunction g_{\dataDirichlet}, \timeFunction d_{\dataNeumann}$ in the problem formulation (\ref{trans_probl:D},\ref{trans_probl:N}) were obtained from $\laplaceFunction{b}$;
in this case an immediate extension to $\Sigma$ is available.
If $\laplaceFunction{b}$ is not given, it can be \emph{computed} as the solution of a well-posed boundary value problem for $-\Delta$ with mixed boundary conditions. 
We emphasize that, as far as the boundary problem is concerned, only the traces of $\laplaceFunction{b}$ are required.

We set
\begin{equation*}
  \uZero \coloneqq u - b \text{ with } b \coloneqq \mathcal{L}^{-1} \laplaceFunction{b}
\end{equation*}
and observe that $\traceFull(s)(\hat{u}(s)-\hat{b}(s))=\traceFull(s) \zero{\laplaceFunction{u}}\in \XSingleZero$.

The boundary conditions (\ref{trans_probl}) for the new function $\uZero$ now read
\begin{equation}
  \label{eq:urelations}
  \traceDFull \uZero|_{\Gamma_{\dataDirichlet}}=0, \qquad
  \traceNFull \uZero|_{\Gamma_{\dataNeumann}}=0, \qquad
  \traceNFull \uZero|_{\Gamma_{\dataImpedance}} - \timeOperator T \ast (\traceDFull \dot u_0)|_{\Gamma_{\dataImpedance}} = d_{\dataImpedance} + \traceNFull b|_{\Gamma_{\dataImpedance}} +  \timeOperator T \ast \left( \traceDFull \dot b \right)|_{\Gamma_{\dataImpedance}}.
\end{equation}
Note that the expression $\timeOperator{T} \ast (\traceDFull \zero{\dot u})|_{\Gamma_{\dataImpedance}}$ is well defined, because our assumptions on $g_{\dataDirichlet}$ imply that $b|_{\Gamma_{\dataImpedance}}$ can be extended by zero on $\Gamma_{\dataDirichlet}$.

Since $\zero{\dot u}$ vanishes on $\Gamma_{\dataDirichlet}$ and $\frac{\partial \uZero}{\partial n}$ vanishes on $\Gamma_{\dataNeumann}$, the function $\restrict{\traceFullScaled \zero{\laplaceFunction u}}_{\Gamma_{\dataImpedance}}$ belongs to $\tilde {H}_{\dataDirichlet}^{1/2}\left(  \Gamma_{\dataImpedance}\right)  \times \tilde{H}_{\dataNeumann}^{-1/2}\left(  \Gamma_{\dataImpedance}\right)$;
(see (\ref{eq:halfTilde})).

Let $\vectorValued{\Phi} = \transpose{(\phi_{1,\dataDirichlet}, \phi_{1,\dataNeumann},\phi_{2,\dataDirichlet}, \phi_{2,\dataNeumann})} , \vectorValued{\Psi} = \transpose{(\psi_{1,\dataDirichlet}, \psi_{1,\dataNeumann},\psi_{2,\dataDirichlet}, \psi_{2,\dataNeumann})} \in \XMulti$.
In analogy to (\ref{eq:pairings}), we define the pairing on $\Sigma$:
\begin{equation*}
  \left\langle \vectorValued{\Phi} , \vectorValued{\Psi} \right\rangle^+_{\Sigma}
  \coloneqq \sum_{j=1}^{2} \left( \left\langle \phi_{j,\dataDirichlet}, \psi_{j,\dataNeumann} \right\rangle_{\Gamma_j} + \left\langle \psi_{j,\dataDirichlet},\phi_{j,\dataNeumann} \right\rangle_{\Gamma_j} \right)
\end{equation*}
and on the open surface $\Gamma_{\dataImpedance}$:
\begin{equation*}
  \left\langle \vectorValued{\Phi} , \vectorValued{\Psi} \right\rangle^+_{\Gamma_{\dataImpedance}}
  \coloneqq \sum_{j=1}^{2} \left( \left\langle \phi_{j,\dataDirichlet}, \psi_{j,\dataNeumann} \right\rangle_{\Gamma_{j,\dataImpedance}} + \left\langle \psi_{j,\dataDirichlet},\phi_{j,\dataNeumann}\right\rangle _{\Gamma_{j,\dataImpedance}} \right).
\end{equation*}

\begin{proposition}
  \label{th:bilinearEquivalent}
  For any $\vectorValued{\Phi}, \vectorValued{\Psi} \in \XSingleZero$, $\left\langle \vectorValued{\Phi} , \vectorValued{\Psi} \right\rangle^+_{\Sigma} = \left\langle \vectorValued{\Phi} , \vectorValued{\Psi} \right\rangle^+_{\Gamma_{\dataImpedance}}$.
\end{proposition}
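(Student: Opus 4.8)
The plan is to reduce both bilinear forms to the divergence theorem, exploiting that elements of $\XSingleZero$ are Cauchy traces of global fields whose Dirichlet and Neumann parts vanish on $\Gamma_{\dataDirichlet}$ and $\Gamma_{\dataNeumann}$. Since $\XSingleZero\subset\XSingle$, choose by (\ref{DefXsinglespaceD}) representatives $v,v'\in H_{\dataDirichlet}^{1}(\mathbb R^{3})$ and $\vectorValued{w},\vectorValued{w}'\in\vectorValued H_{\dataNeumann}(\mathbb R^{3},\divergence)$ with $\phi_{i,\dataDirichlet}=\traceD{i}v$, $\phi_{i,\dataNeumann}=\traceNormal{i}\vectorValued{w}$, $\psi_{i,\dataDirichlet}=\traceD{i}v'$, $\psi_{i,\dataNeumann}=\traceNormal{i}\vectorValued{w}'$ for $i=1,2$. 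As the auxiliary domains $\Omega_{Z_{\dataDirichlet}},\Omega_{Z_{\dataNeumann}}$ lie outside $\Omega$, we have $v|_{\Omega_{i}},v'|_{\Omega_{i}}\in H^{1}(\Omega_{i})$ and $\vectorValued{w}|_{\Omega_{i}},\vectorValued{w}'|_{\Omega_{i}}\in\vectorValued H(\Omega_{i},\divergence)$, so Green's formula $\langle\traceD{i}v,\traceNormal{i}\vectorValued{w}'\rangle_{\Gamma_{i}}=\int_{\Omega_{i}}(\vectorValued{w}'\cdot\gradient v+v\,\divergence\vectorValued{w}')\de x$ holds on each $\Omega_{i}$ (and likewise with $v,\vectorValued{w}$ and $v',\vectorValued{w}'$ interchanged).

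Summing over $i=1,2$, the two subdomain integrals combine to an integral over $\Omega$ (the interface $\Gamma_{\dataJump}$ being a surface-measure null set), and a second application of the divergence theorem, now on the Lipschitz domain $\Omega$, recasts the volume term as a duality pairing on $\partial\Omega$; equivalently, on the boundary side the contributions of the two subdomains over $\Gamma_{\dataJump}$ cancel, since $\normalN_{1}=-\normalN_{2}$ there while $v,v',\vectorValued{w},\vectorValued{w}'$ are globally defined and hence single-valued across $\Gamma_{\dataJump}$. Either way we obtain
\begin{equation*}
  \left\langle\vectorValued{\Phi},\vectorValued{\Psi}\right\rangle_{\Sigma}^{+}
  =\left\langle v|_{\partial\Omega},\langle\normalN_{\Sigma},\vectorValued{w}'\rangle|_{\partial\Omega}\right\rangle_{\partial\Omega}
  +\left\langle v'|_{\partial\Omega},\langle\normalN_{\Sigma},\vectorValued{w}\rangle|_{\partial\Omega}\right\rangle_{\partial\Omega},
\end{equation*}
an $H^{1/2}(\partial\Omega)$--$H^{-1/2}(\partial\Omega)$ duality pairing over $\partial\Omega=\Gamma_{\dataDirichlet}\cup\Gamma_{\dataNeumann}\cup\Gamma_{\dataImpedance}$ (up to a null set), with $\normalN_{\Sigma}$ the outward unit normal of $\Omega$.

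It remains to localize this pairing to $\Gamma_{\dataImpedance}$. By (\ref{eq:PartialZeroExtension:d}) the traces $v|_{\partial\Omega},v'|_{\partial\Omega}$ vanish on $\Gamma_{\dataDirichlet}\subset\partial\Omega_{Z_{\dataDirichlet}}$, and by (\ref{eq:PartialZeroExtension:n}) the normal traces $\langle\normalN_{\Sigma},\vectorValued{w}\rangle|_{\partial\Omega},\langle\normalN_{\Sigma},\vectorValued{w}'\rangle|_{\partial\Omega}$ vanish on $\Gamma_{\dataNeumann}\subset\partial\Omega_{Z_{\dataNeumann}}$. Using that an $H^{1/2}(\partial\Omega)$-function vanishing on the relatively open set $\Gamma_{\dataDirichlet}$ restricts to an element of $\tilde H^{1/2}$ of the complementary subsurface (and the corresponding statement for $H^{-1/2}$ and $\Gamma_{\dataNeumann}$; cf.\ \cite[Ch.~3]{Mclean00}), and that restriction to a subsurface is the adjoint of extension by zero, each of the two pairings above collapses to a pairing over $\Gamma_{\dataImpedance}$ alone. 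Finally, splitting $\Gamma_{\dataImpedance}=\Gamma_{1,\dataImpedance}\cup\Gamma_{2,\dataImpedance}$ (again up to a null set) and using $\normalN_{i}=\normalN_{\Sigma}$ on $\Gamma_{i,\dataImpedance}\subset\partial\Omega$, one reads off exactly $\left\langle\vectorValued{\Phi},\vectorValued{\Psi}\right\rangle_{\Gamma_{\dataImpedance}}^{+}$.

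The two Green's formulas are routine; the real work sits in the localization step, i.e.\ the careful bookkeeping with the spaces $\tilde H^{\pm1/2}$ on the open subsurfaces $\Gamma_{\dataDirichlet},\Gamma_{\dataNeumann},\Gamma_{\dataImpedance}$ and the observation that the relative interiors of $\Gamma_{\dataDirichlet},\Gamma_{\dataNeumann},\Gamma_{\dataImpedance},\Gamma_{\dataJump}$ are pairwise disjoint with lower-dimensional common boundaries, so that the duality pairing on $\partial\Omega$ splits additively and annihilates any piece on which one of its two factors vanishes. One must also keep the orientations straight: it is precisely the sign flip $\normalN_{1}=-\normalN_{2}$ on $\Gamma_{\dataJump}$, in contrast with $\normalN_{i}=\normalN_{\Sigma}$ on $\Gamma_{i,\dataImpedance}$, that makes the interface terms cancel rather than reinforce each other.
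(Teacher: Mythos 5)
Your proof is correct, but your main route is genuinely different from the paper's. The paper stays entirely on the boundary: for each $j$ it splits $\left\langle \phi_{j,\dataDirichlet},\psi_{j,\dataNeumann}\right\rangle_{\Gamma_j}$ into the contributions on $\Gamma_{j,\dataImpedance}$ and $\Gamma_{j,\dataJump}$ (the pieces on $\Gamma_{j,\dataDirichlet}$ and $\Gamma_{j,\dataNeumann}$ drop out by the definition of $\XSingleZero$) and then cancels the two interface contributions pairwise using the single-trace relations $\restrict{\phi_{1,\dataDirichlet}}_{\Gamma_{\dataJump}}=\restrict{\phi_{2,\dataDirichlet}}_{\Gamma_{\dataJump}}$ and $\restrict{\phi_{1,\dataNeumann}}_{\Gamma_{\dataJump}}=-\restrict{\phi_{2,\dataNeumann}}_{\Gamma_{\dataJump}}$; this is exactly your parenthetical \emph{equivalently, on the boundary side} remark, so that branch of your argument coincides with the paper's proof. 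Your primary route instead passes through the volume: you invoke the global representatives $v\in H_{\dataDirichlet}^1\left(\mathbb R^3\right)$, $\vectorValued w\in\vectorValued H_{\dataNeumann}\left(\mathbb R^3,\divergence\right)$ from (\ref{DefXsinglespaceD}), integrate by parts on $\Omega_1$ and $\Omega_2$, reassemble over $\Omega$, integrate by parts once more to obtain a single pairing on $\partial\Omega$, and then localize to $\Gamma_{\dataImpedance}$. What this buys is that the interface cancellation becomes automatic (the volume integrals never see $\Gamma_{\dataJump}$, since single-valuedness across it is encoded in the global fields); what it costs is the extra machinery of two Green identities plus the localization step on $\partial\Omega$ via the $\tilde H^{\pm1/2}$ dualities, which is of the same nature as -- and no easier than -- the splitting the paper performs directly on each $\Gamma_j$. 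Both versions share the same mild informality about decomposing $H^{1/2}$--$H^{-1/2}$ pairings over a partitioned Lipschitz surface; you are, if anything, more explicit about where that issue sits, and you correctly use $\normalN_i=\normalN_\Sigma$ on $\Gamma_{i,\dataImpedance}\subset\partial\Omega$ when reading off $\left\langle\vectorValued\Phi,\vectorValued\Psi\right\rangle^+_{\Gamma_{\dataImpedance}}$ at the end.
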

\begin{proof}
  Fix $j \in \{1,2\}$;
  since $\restrict{\phi_{j,\dataDirichlet}}_{\Gamma_{\dataDirichlet}} = 0$ and $\restrict{\phi_{j,\dataNeumann}}_{\Gamma_{\dataNeumann}} = 0$ (and the same properties hold for $\vectorValued{\Psi}$) we get
  \begin{equation*}
    \left\langle \phi_{j,\dataDirichlet}, \psi_{j,\dataNeumann} \right\rangle_{\Gamma_j} =
    \left\langle \phi_{j,\dataDirichlet}, \psi_{j,\dataNeumann} \right\rangle_{\Gamma_{j,\dataImpedance}} +
    \left\langle \phi_{j,\dataDirichlet}, \psi_{j,\dataNeumann} \right\rangle_{\Gamma_{j,\dataJump}}.
  \end{equation*}
  Moreover since $\vectorValued{\Phi} \in \XSingle$, $\restrict{\phi_{1,\dataDirichlet}}_{\Gamma_{\dataJump}} = \restrict{\phi_{2,\dataDirichlet}}_{\Gamma_{\dataJump}}$ and $\restrict{\phi_{1,\dataNeumann}}_{\Gamma_{\dataJump}} = - \restrict{\phi_{2,\dataNeumann}}_{\Gamma_{\dataJump}}$ (and the same properties hold for $\vectorValued{\Psi}$).
  Hence
  \begin{equation*}
    \begin{aligned}
      \left\langle \vectorValued{\Phi}, \vectorValued{\Psi} \right\rangle^+_{\Sigma} = & \sum_{j=1}^2 \left( \left\langle \phi_{j,\dataDirichlet}, \psi_{j,\dataNeumann} \right\rangle_{\Gamma_j} + \left\langle \psi_{j,\dataDirichlet},\phi_{j,\dataNeumann} \right\rangle_{\Gamma_j} \right)\\
      = & \sum_{j=1}^2  \left( \left\langle \phi_{j,\dataDirichlet}, \psi_{j,\dataNeumann} \right\rangle_{\Gamma_{j,\dataImpedance}} + \left\langle \psi_{j,\dataDirichlet}, \phi_{j,\dataNeumann} \right\rangle_{\Gamma_{j,\dataImpedance}} \right)\\
      & + \left\langle \phi_{1,\dataDirichlet}, \psi_{1,\dataNeumann} \right\rangle_{\Gamma_{1,\dataJump}} +\left\langle \phi_{2,\dataDirichlet}, \psi_{2,\dataNeumann} \right\rangle_{\Gamma_{2,\dataJump}}\\
      & + \left\langle \psi_{1,\dataDirichlet}, \phi_{1,\dataNeumann} \right\rangle_{\Gamma_{1,\dataJump}} +\left\langle \psi_{2,\dataDirichlet}, \phi_{2,\dataNeumann} \right\rangle_{\Gamma_{2,\dataJump}} = \left\langle \vectorValued{\Phi} , \vectorValued{\Psi} \right\rangle^+_{\Gamma_{\dataImpedance}}.
    \end{aligned}
  \end{equation*}
  These two pairings therefore coincide on $\XSingleZero$.
\end{proof}

Define for $\vectorValued{\Phi}, \vectorValued{\Psi} \in \XMulti$:
\begin{subequations}
  \label{eq:a0l0}
  \begin{align}
    \aZero \left( s; \vectorValued{\Phi}, \vectorValued{\Psi} \right) \coloneqq & \left\langle \left( \vectorValued{\laplaceOperator A}(s) - \frac{\vectorValued{\identity}}{2} \right) \vectorValued{\Phi}, \conjugate{\vectorValued{\Psi}} \right\rangle_{\Sigma}^+, \label{eq:a0}\\
    \lZero \left( s; \vectorValued{\Psi} \right) \coloneqq & \aZero (s; \traceFullScaled \laplaceFunction{b}, \vectorValued{\Psi}). \label{eq:l0}
  \end{align}
\end{subequations}

\begin{problem}
  Find the Laplace transformed Cauchy traces $\traceFullScaled \zero{\laplaceFunction u} \in \XSingleZero$
  \begin{subequations}
    \label{eq:variationalZero}
    \begin{align}
      \aZero(s; \traceFullScaled \zero{\laplaceFunction u}, \vectorValued{\laplaceFunction \Psi}(s)) = & - \lZero \left( s;\vectorValued{\laplaceFunction \Psi}(s) \right) \quad
                                                                                                         \forall \vectorValued{\laplaceFunction \Psi}(s) \in \XSingleZero, s \in \mathbb C_{\sigma_0};\label{eq:variationalZero:variational}\\
      \traceNFull \zero{\laplaceFunction u}|_{\Gamma_{\dataImpedance}} - \laplaceOperator T \left( \traceDFull s \zero{\laplaceFunction u} \right)|_{\Gamma_{\dataImpedance}} = &
        \laplaceFunction d_{\dataImpedance} - \traceNFull \laplaceFunction b|_{\Gamma_{\dataImpedance}} +  \laplaceOperator T \left( \traceDFull s \laplaceFunction b \right)|_{\Gamma_{\dataImpedance}},\label{eq:variationalZero:boundary}
    \end{align}
  \end{subequations}
  where the second equation expresses the boundary condition on $\Gamma_{\dataImpedance}$, which will be incorporated in the variational formulation in Section~\ref{SecLinComb}.
\end{problem}

\subsection{Variational formulation including impedance boundary conditions \label{SecLinComb}}
Finally, we incorporate the impedance boundary condition (\ref{trans_probl:I}).

We start by defining, for $\boldsymbol{\hat{\phi}}\in \XSingleZero$, functions $\hat{\phi}_{\dataDirichlet},\hat
{\phi}_{\dataNeumann}$ on $\Gamma_{\dataImpedance}\subseteq\partial\Omega$ such that
\begin{align*}
  \laplaceFunction{\phi}_{\dataDirichlet}|_{\Gamma_{j,\dataImpedance}}\coloneqq & \laplaceFunction{\phi}_{j,\dataDirichlet}|_{\Gamma_{j,\dataImpedance}}, &
  \laplaceFunction{\phi}_{\dataNeumann}|_{\Gamma_{j,\dataImpedance}}\coloneqq & \laplaceFunction{\phi}_{j,\dataNeumann}|_{\Gamma_{j,\dataImpedance}}, & \text{for } j=1,2.
\end{align*}
Due to the definition of $\XSingleZero$ we have
$\hat{\phi}_{\dataDirichlet}\in\tilde{H}_{\dataDirichlet}^{1/2}(\Gamma
_{\dataImpedance}),\hat{\phi}_{\dataNeumann}\in\tilde{H}_{\dataNeumann}%
^{-1/2}(\Gamma_{\dataImpedance})$.

We treat the Dirichlet and Neumann boundary condition as explained in Section \ref{sec:dirichlet}, but incorporate the impedance condition of (\ref{eq:urelations}) keeping both the Dirichlet and Neumann trace as unknowns in the resulting equations.
Recall the impedance condition (cf. (\ref{eq:urelations})), and set $\laplaceFunction{\xi} := \traceFullScaled \zero{\laplaceFunction{u}}$:
\begin{equation*}
  \laplaceFunction{\xi}_{\dataNeumann}(s) - \mathsf{T} (s) \laplaceFunction{\xi}_{\dataDirichlet}(s) = s^{-1/2} \hat{d}_{\dataImpedance}(s) - s^{-1/2} (\gamma_{\dataNeumann, 0} \hat b(s))|_{\Gamma_{\dataImpedance}} + s^{1/2} \mathsf{T}(s) (\gamma_{\dataDirichlet, 0} \hat b(s))|_{\Gamma_{\dataImpedance}}.
\end{equation*}
This gives rise to the definition of the sesquilinear form $\aImp(s) \colon \XSingleZero \times \XSingleZero \to\mathbb{C}$ and right-hand side functional $\lImp (s) \colon \XSingleZero \to \mathbb{C}$:
\begin{align*}
  \aImp (s; \vectorValued{\phi}, \vectorValued{\psi}) & \coloneqq \left\langle \phi_{\dataNeumann} - \laplaceOperator T(s) \phi_{\dataDirichlet}, \conjugate{\psi_{\dataDirichlet}} \right\rangle_{\Gamma_{\dataImpedance}}, \\
  \lImp(s; \vectorValued{\psi})                       & \coloneqq \left\langle s^{-1/2} \laplaceFunction d_{\dataImpedance}(s) - s^{-1/2} \restrict{\traceNFull \laplaceFunction b(s)}_{\Gamma_{\dataImpedance}} + s^{1/2} \laplaceOperator T(s) \restrict{\traceDFull \hat b(s)}_{\Gamma_{\dataImpedance}}, \conjugate{\psi_{\dataDirichlet}} \right\rangle_{\Gamma_{\dataImpedance}}.
\end{align*}

\begin{problem}[Mixed Formulation of Acoustic Mixed Transmission Problem]
  \label{problem:mixedFormulation}
  Find $\vectorValued{\laplaceFunction \phi} \in \XSingleZero$ such that
  \begin{equation*}
    \aMix \left(s; \vectorValued{\laplaceFunction \phi}, \vectorValued{\laplaceFunction \psi} \right) = \lMix \left(s; \vectorValued{\laplaceFunction \psi} \right) \quad\forall \vectorValued{\laplaceFunction \psi} \in \XSingleZero,
  \end{equation*}
  where $\aMix(s) \coloneqq \aZero(s) + \aImp(s)$ and $\lMix(s) \coloneqq \lZero(s) + \lImp(s)$.
\end{problem}

The corresponding formulation in the time domain is the following.
\addtocounter{theorem}{-1}
\begin{problem}[Time domain formulation of Acoustic Mixed Transmission Problem]
  For any $t \in \intervalCC{0,T}$, find $\vectorValued{\timeFunction{\phi}} \in \XSingleZero$ such that
  \begin{equation}
    \label{eq:variationalTime:Tranmission_direct_form_ext}
    \begin{aligned}
      & \left\langle \left( \vectorValued{\timeOperator{A}}(t) - \frac{\vectorValued{\delta_{0}}}{2} \right) * \vectorValued{\timeFunction{\phi}}(t), \overline{\vectorValued{\timeFunction{\psi}}} \right\rangle_{\Sigma}^{+} +
      \left\langle \timeFunction{\phi}_{\dataNeumann} - \timeOperator{T}(t) * \timeFunction{\phi}_{\dataDirichlet}(t), \overline{\timeFunction{\psi}_{\dataDirichlet}} \right\rangle_{\Gamma_{\dataImpedance}} = \\
      & \qquad \left\langle \left( \vectorValued{\timeOperator{A}}(t) - \frac{\vectorValued{\delta_{0}}}{2} \right) * \traceFull (t) b(t), \overline{\vectorValued{\timeFunction{\psi}}} \right\rangle_{\Sigma}^{+} +
      \left\langle \partial_t^{-1/2} \timeFunction d_{\dataImpedance}(t) - \partial_{t}^{-1/2} \restrict{\traceNFull \laplaceFunction b(t)}_{\Gamma_{\dataImpedance}} + \timeOperator T(t) * \partial_{t}^{1/2} \restrict{\traceDFull \timeFunction b(t)}_{\Gamma_{\dataImpedance}}, \conjugate{\psi_{\dataDirichlet}} \right\rangle_{\Gamma_{\dataImpedance}} \\
      & \qquad \forall \vectorValued{\timeFunction{\psi}} \in \XSingleZero,
    \end{aligned}
  \end{equation}
  where $\traceFull (t) \coloneqq \transpose{\left( \partial_{t}^{1/2} \traceD{1}, \partial_{t}^{-1/2} \traceN{1}, \partial_{t}^{1/2} \traceD{2}, \partial_{t}^{-1/2} \traceN{2} \right)}$,
  \begin{equation*}
    \vectorValued{\timeOperator{A}}(t) \coloneqq
    \begin{bmatrix}
      \begin{matrix}
        -\timeOperator K_1(t) & \partial_{t} \timeOperator V_1(t)\\
        \partial_{t}^{-1} \timeOperator W_1(t)  & \timeOperator K_1'(t)
      \end{matrix} &\\
      & \begin{matrix}
        -\timeOperator K_2(t) & \partial_{t} \timeOperator V_2(t)\\
        \partial_{t}^{-1} \timeOperator W_2(t)  & \timeOperator K_2'(t)
      \end{matrix}
    \end{bmatrix}
  \end{equation*}
  and the notation $\partial_{t}^{\mu}$ for $\mu \in \mathbb{R}$ is defined as the inverse Laplace transform applied to the multiplication by $s^{\mu}$, i.e., $\partial_{t}^{\mu} \phi \coloneqq \mathcal{L}^{-1} (s^{\mu} \phi )$.
  For $\mu = -1$, $\partial_{t}^{-1}$ is the antiderivative with respect to $t$: $\partial_{t}^{-1} \timeFunction{\phi}(t) = \int_0^t \timeFunction{\phi}(\tau) \de \tau$.
\end{problem}

The solution of this problem gives the trace $\traceFull(t) \zero{\timeFunction u}$.
The solution of the wave equation (\ref{trans_probl}) is then obtained in two steps: first the offset $\traceFull(t) \timeFunction b$ is added to obtain the solution for the boundary data (Section~\ref{sec:dirichlet});
then the solution in the whole domain can be obtained using the layer potentials (Section~\ref{SecLPBIE}).

\begin{remark}
  It is also possible to use (\ref{eq:urelations}) to eliminate the Neumann data on $\Gamma_{\dataImpedance}$.
  This would lead to a system of integral equations containing the minimal number of unknowns:
  the Neumann data on $\Gamma_{\dataDirichlet}$, the Dirichlet data on $\Gamma_{\dataNeumann} \cup\Gamma_{\dataImpedance}$, the Dirichlet and Neumann data on $\Gamma_{\dataJump}$.
  The drawback is that a function $\laplaceFunction d$ on $\Omega$ has to be constructed, which provides a skeleton extension of the impedance data; more precisely, $\laplaceFunction d$ must satisfy
  \begin{align*}[left= {\empheqlbrace}]
    - s^{-1/2} \restrict{\traceNFull \laplaceFunction d}_{\Gamma_{\dataImpedance}} - s^{1/2} \laplaceOperator T \restrict{\traceDFull \laplaceFunction d}_{\Gamma_{\dataImpedance}} &  =
                                                                                                                                                                                      s^{-1/2} \laplaceFunction d_{\dataImpedance} + s^{-1/2} \restrict{\traceNFull \laplaceFunction b}_{\Gamma_{\dataImpedance}} + s^{1/2} \laplaceOperator T \restrict{\traceDFull \laplaceFunction b}_{\Gamma_{\dataImpedance}},\\
    \left[ \hat d \right]_{\Gamma_{\dataJump}}=\left[ a^2 \frac{\partial \hat d}{\partial n} \right]_{\Gamma_{\dataJump}} & =0,\\
    \restrict{\traceNFull \laplaceFunction d}_{\Gamma_{\dataNeumann}} & = 0,\\
    \restrict{\traceDFull \laplaceFunction d}_{\Gamma_{\dataDirichlet}} & = 0.
  \end{align*}
\end{remark}

\section{Well-Posedness of Time Domain Boundary Integral Equation\label{SecCaldOp}}
\label{sec:WellPosednessAnalysis}

In the following we will recall mapping properties of the single and double layer potentials and their corresponding integral equations.

For $j\in\{0,1,2\}$, the proofs of the following propositions
(Prop.~\ref{PropEll} and the 3rd and 6th inequality in Prop.~\ref{PropCont},
(\ref{table_map_prop})) go back to \cite{bambduong}. We have used here the
estimates for the boundary integral operators as in \cite{LaSa2009}.

\begin{proposition}
\label{PropCont}Let $s\in\mathbb{C}_{\sigma_{0}}$ and recall (\ref{defatau}).
Then, for $j \in\{0,1,2\}$, the operators $\laplaceOperator S_j(s)$, $\laplaceOperator D_j(s)$, $\laplaceOperator V_j(s)$, $\laplaceOperator K_j(s)$, $\laplaceOperator K_j^{\prime}(s)$, $\laplaceOperator W_j(s)$, satisfy the following mapping properties:
for all
$\Phi\in H^{-1/2} \left(  \Gamma_{j} \right) $ and $\Psi\in H^{1/2} \left(
\Gamma_{j} \right) $ there is some constant $C$ independent of $s$ such that
\begin{equation}
\label{table_map_prop}\begin{aligned} \ensuremath{\mathsf{S}}_j(s) & :H^{-1/2}\left( \Gamma_j \right) \to H^1 \left(\mathbb R^3 \right), & \norm{\laplaceOperator S_i(s) \Phi}_{H^1 \left(\mathbb R^3 \right)} & \leq C \abs{s} \norm{\Phi}_{H^{-1/2}\left(\Gamma_j\right)}, \\ \ensuremath{\mathsf{D}}_j(s) & :H^{1/2} \left( \Gamma_j \right) \to H^1 \left(\mathbb R^3 \setminus \Gamma_j \right), & \norm{\laplaceOperator D_i(s) \Psi}_{H^1 \left(\mathbb R^3 \setminus \Gamma_j \right)} & \leq C \abs{s}^{3/2}\norm{\Psi}_{H^{1/2} \left(\Gamma_j\right)}, \\ \ensuremath{\mathsf{V}}_j(s) & :H^{-1/2}\left( \Gamma_j \right) \to H^{1/2} \left(\Gamma_j \right), & \norm{\laplaceOperator V_j(s) \Phi}_{H^{1/2} \left(\Gamma_j\right)} & \leq C \abs{s} \norm{\Phi}_{H^{-1/2}\left(\Gamma_j\right)}, \\ \ensuremath{\mathsf{K}}_j(s) & :H^{1/2} \left( \Gamma_j \right) \to H^{1/2} \left(\Gamma_j \right), & \norm{\laplaceOperator K_j(s) \Psi}_{H^{1/2} \left(\Gamma_j\right)} & \leq C \abs{s}^{3/2}\norm{\Psi}_{H^{1/2} \left(\Gamma_j\right)}, \\ \ensuremath{\mathsf{K}}_j'(s) & :H^{-1/2}\left( \Gamma_j \right) \to H^{-1/2}\left(\Gamma_j \right), & \norm{\laplaceOperator K_j'(s)\Phi}_{H^{-1/2}\left(\Gamma_j\right)} & \leq C \abs{s}^{3/2}\norm{\Phi}_{H^{-1/2}\left(\Gamma_j\right)}, \\ \ensuremath{\mathsf{W}}_j(s) & :H^{1/2} \left( \Gamma_j \right) \to H^{-1/2}\left(\Gamma_j \right), & \norm{\laplaceOperator W_j(s) \Psi}_{H^{-1/2}\left(\Gamma_j\right)} & \leq C \abs{s}^2 \norm{\Psi}_{H^{1/2} \left(\Gamma_j\right)}. \end{aligned}
\end{equation}
\end{proposition}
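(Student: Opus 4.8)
The plan is to recognise all six operators as (averages of) traces of the single- and double-layer \emph{potentials} $\laplaceOperator{S}_{j}(s)$, $\laplaceOperator{D}_{j}(s)$ for the constant-coefficient modified Helmholtz operator $s^{2}p_{j}^{2}-a_{j}^{2}\Delta$, whose fundamental solution is precisely $\laplaceFunction{k}_{j}$, and to estimate these potentials through the coercive, $s$-shifted variational problem attached to that operator. This is the technique of \cite{bambduong}, with the frequency-explicit constants of \cite{LaSa2009,sayas_book}. On $H^{1}(\mathbb{R}^{3})$ (and on $H^{1}(\mathbb{R}^{3}\setminus\Gamma_{j})$) introduce the frequency-weighted norm $\norm{v}_{\abs{s}}^{2}\coloneqq a_{j}^{2}\norm{\gradient v}^{2}+\abs{s}^{2}p_{j}^{2}\norm{v}^{2}$ and the sesquilinear form $\mathcal{A}_{s}(u,v)\coloneqq\int_{\mathbb{R}^{3}}(a_{j}^{2}\gradient u\cdot\conjugate{\gradient v}+s^{2}p_{j}^{2}u\conjugate{v})$. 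Two elementary facts drive everything: the \emph{coercivity} $\real(\bar{s}\,\mathcal{A}_{s}(u,u))=\real(s)\norm{u}_{\abs{s}}^{2}\geq\sigma_{0}\norm{u}_{\abs{s}}^{2}$ on $\mathbb{C}_{\sigma_{0}}$, and the \emph{frequency-uniform continuity} $\abs{\mathcal{A}_{s}(u,v)}\leq 2\norm{u}_{\abs{s}}\norm{v}_{\abs{s}}$. Together with $\abs{\real(\bar{s}z)}\leq\abs{s}\abs{z}$ these already inject the characteristic factor $\abs{s}$ into every estimate, and by the Convention $\norm{\cdot}_{\abs{s}}$ is comparable to $\norm{\cdot}_{H^{1}}$ with an $s$-independent constant (depending on $\sigma_{0}$ and on the material parameters).

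For the single-layer potential: given $\Phi\in H^{-1/2}(\Gamma_{j})$, the jump relations (\ref{jump_rel_Lapl}) and Green's first identity (integrate by parts over $\Omega_{j}$ and over $\Omega_{j}^{c}$, cancelling the volume terms against $s^{2}p_{j}^{2}u\conjugate{v}$ by means of $a_{j}^{2}\Delta u=s^{2}p_{j}^{2}u$) show that $u=\laplaceOperator{S}_{j}(s)\Phi\in H^{1}(\mathbb{R}^{3})$ is the unique solution of $\mathcal{A}_{s}(u,v)=\langle\Phi,\conjugate{\traceD{j}v}\rangle_{\Gamma_{j}}$ for all $v\in H^{1}(\mathbb{R}^{3})$. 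Testing with $v=u$, using coercivity, the \emph{unweighted} trace bound $\norm{\traceD{j}v}_{H^{1/2}(\Gamma_{j})}\leq C\norm{v}_{H^{1}(\mathbb{R}^{3})}\leq C\norm{v}_{\abs{s}}$ and Cauchy--Schwarz gives $\norm{u}_{\abs{s}}\leq C\abs{s}\norm{\Phi}_{H^{-1/2}(\Gamma_{j})}$, hence the $\laplaceOperator{S}_{j}$-estimate and, composing with the continuous Dirichlet trace, the $\laplaceOperator{V}_{j}$-estimate. For the conormal traces entering $\laplaceOperator{K}_{j}^{\prime}=\{\laplaceOperator{S}_{j}\cdot\}_{\dataNeumann;j}$ I would instead use the one-sided Green identity $\langle\traceN{j}u,\conjugate{\traceD{j}v}\rangle_{\Gamma_{j}}=\int_{\Omega_{j}}(a_{j}^{2}\gradient u\cdot\conjugate{\gradient v}+s^{2}p_{j}^{2}u\conjugate{v})$ and test it against an \emph{$s$-weighted} lifting $v$ of an arbitrary density $\mu\in H^{1/2}(\Gamma_{j})$ with $\norm{v}_{\abs{s}}\leq C\abs{s}^{1/2}\norm{\mu}_{H^{1/2}(\Gamma_{j})}$; combined with the bound just obtained for $\norm{u}_{\abs{s}}$, this yields the claimed $\abs{s}^{3/2}$ estimate (and likewise for the exterior conormal trace, over $\Omega_{j}^{c}$).

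For the double-layer potential $u=\laplaceOperator{D}_{j}(s)\psi$ with $\psi\in H^{1/2}(\Gamma_{j})$, I would remove the Dirichlet jump by an $s$-weighted lifting $\psi^{\flat}\in H^{1}(\mathbb{R}^{3}\setminus\Gamma_{j})$ with $[\psi^{\flat}]_{\dataDirichlet;j}=\psi$ and $\norm{\psi^{\flat}}_{\abs{s}}\leq C\abs{s}^{1/2}\norm{\psi}_{H^{1/2}(\Gamma_{j})}$. Since $[u]_{\dataDirichlet;j}=\psi$, $[u]_{\dataNeumann;j}=0$, Green's identity gives $\mathcal{A}_{s}(u,v)=0$ for all $v\in H^{1}(\mathbb{R}^{3})$, so $\tilde{u}\coloneqq u-\psi^{\flat}\in H^{1}(\mathbb{R}^{3})$ solves $\mathcal{A}_{s}(\tilde{u},v)=-\mathcal{A}_{s}(\psi^{\flat},v)$; coercivity and frequency-uniform continuity then deliver $\norm{\tilde{u}}_{\abs{s}}\leq C\abs{s}^{3/2}\norm{\psi}_{H^{1/2}(\Gamma_{j})}$, hence $\norm{u}_{H^{1}(\mathbb{R}^{3}\setminus\Gamma_{j})}\leq C\abs{s}^{3/2}\norm{\psi}_{H^{1/2}(\Gamma_{j})}$ and, via the continuous Dirichlet trace, the $\laplaceOperator{K}_{j}$-estimate. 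Finally $\laplaceOperator{W}_{j}=-\{\laplaceOperator{D}_{j}\cdot\}_{\dataNeumann;j}$ is handled exactly as $\laplaceOperator{K}_{j}^{\prime}$: pairing $\restrict{\traceN{j}u}_{\Gamma_{j}}$ with an $s$-weighted lifting and inserting $\norm{u}_{\abs{s},\Omega_{j}}\leq C\abs{s}^{3/2}\norm{\psi}_{H^{1/2}(\Gamma_{j})}$ produces the $\abs{s}^{2}$ estimate.

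The variational well-posedness and the two bounds on $\mathcal{A}_{s}$ are routine; the delicate part --- and what the cited references really carry out --- is the bookkeeping of the powers of $\abs{s}$. Landing on the \emph{sharp} exponents $1,\tfrac{3}{2},2$ rather than cruder ones hinges on using the unweighted trace inequality for $\laplaceOperator{V}_{j},\laplaceOperator{K}_{j}$ but the $s$-weighted lifting/trace estimates for $\laplaceOperator{K}_{j}^{\prime},\laplaceOperator{W}_{j}$ and for removing the jump of $\laplaceOperator{D}_{j}$, together with the observation that each invocation of coercivity costs exactly one factor $\abs{s}$. Since $\Gamma_{j}=\partial\Omega_{j}$ is a closed Lipschitz surface for every $j\in\{0,1,2\}$, no open-surface subtleties intervene; the case $j=0$ is identical, with the unbounded $\Omega_{0}$ and the bounded $\Omega$ playing the roles of $\Omega_{j}^{c}$ and $\Omega_{j}$ (the exponential decay of $\laplaceFunction{k}_{0}$ for $\real s>0$ makes the exterior integrals well defined).
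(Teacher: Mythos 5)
Your argument is correct and is essentially the proof the paper relies on: the paper gives no proof of Proposition~\ref{PropCont} itself but defers to \cite{bambduong} and \cite{LaSa2009}, and your $s$-weighted variational argument (coercivity of $\real\bigl(\bar{s}\,\mathcal{A}_{s}(u,u)\bigr)$, frequency-uniform continuity, the identification of $\laplaceOperator{S}_j(s)\Phi$ and $\laplaceOperator{D}_j(s)\psi$ through the jump relations, and $s$-weighted liftings for the conormal traces and for removing the Dirichlet jump) is exactly the technique of those references, yielding the stated powers $\abs{s}$, $\abs{s}^{3/2}$, $\abs{s}^{2}$ once $\real s\geq\sigma_{0}$ is absorbed into the constants. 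One wording caveat: $\norm{\cdot}_{\abs{s}}$ and $\norm{\cdot}_{H^{1}}$ are not two-sidedly equivalent uniformly in $s$ (only $\norm{v}_{H^{1}}\leq C(\sigma_{0})\norm{v}_{\abs{s}}$ is $s$-independent, the reverse bound costs a factor $\abs{s}$), but this uniform direction is the only one your proof actually uses, so the argument stands.
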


Next we analyse the operators $\vectorValued{\laplaceOperator{A}}_i (s)$ which appear (through $\vectorValued{\laplaceOperator{A}}(s)$) in the definition of the sesquilinear form (\ref{eq:a0})
\begin{proposition}
  \label{PropEll} Let $s \in \mathbb{C}_{\sigma_0}$ and recall (\ref{defatau}).
  Then, for $i \in \{1,2\}$, $\vectorValued{\laplaceOperator{A}}_i (s)$ defined in (\ref{defAsj}) satisfies the coercivity estimate
\[
\real\left\langle \boldsymbol{\mathsf{A}}_{i}(s)%
\begin{pmatrix}
\psi\\
\varphi
\end{pmatrix}
,%
\begin{pmatrix}
\overline{\psi}\\
\overline{\varphi}%
\end{pmatrix}
\right\rangle _{\Gamma_{i}}^{+}\geq\beta\min\left\{  1,\abs{s}^{2}\right\}
\frac{\real s}{\abs{s}^{2}}\left(  \norm{\varphi}_{H^{1/2}\left(  \Gamma
_{i}\right)  }^{2}+\norm{\psi}_{H^{-1/2}\left(  \Gamma_{i}\right)  }%
^{2}\right)  ,
\]
for all $\left(  \varphi,\psi\right)  \in H^{1/2}\left(  \Gamma_{i}\right)
\times H^{-1/2}\left(  \Gamma_{i}\right)  $, for some $\beta>0$ and for all
$s\in\mathbb{C}_{\sigma_{0}}$.
\end{proposition}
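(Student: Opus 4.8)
The plan is to run the classical Bamberger--Ha Duong energy argument, adapted to the frequency‑scaled block operator $\boldsymbol{\mathsf{A}}_i(s)$. Write the argument of $\boldsymbol{\mathsf{A}}_i(s)$ as a Cauchy pair $\boldsymbol{\phi}=\transpose{(\phi_{\dataDirichlet},\phi_{\dataNeumann})}\in\XTraces{i}$ and set $\boldsymbol{\mu}\coloneqq\boldsymbol{\mathsf{D}}(s)^{-1}\boldsymbol{\phi}=\transpose{(s^{-1/2}\phi_{\dataDirichlet},s^{1/2}\phi_{\dataNeumann})}$, so that by (\ref{defAsj}) $\boldsymbol{\mathsf{A}}_i(s)\boldsymbol{\phi}=\boldsymbol{\mathsf{D}}(s)\bigl(\boldsymbol{\mathsf{P}}_i(s)-\tfrac{\boldsymbol{Id}}{2}\bigr)\boldsymbol{\mu}$. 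First I would lift $\boldsymbol{\mu}$ to the volume potential $u\coloneqq\mathsf{S}_i(s)\mu_{\dataNeumann}-\mathsf{D}_i(s)\mu_{\dataDirichlet}$. By Proposition~\ref{PropCont} this lies in $H^1(\mathbb{R}^3\setminus\Gamma_i)$, it solves $a_i^2\Delta u=p_i^2 s^2 u$ in $\mathbb{R}^3\setminus\Gamma_i$, and since $\real s>0$ it and its gradient decay exponentially at infinity. From the jump relations (\ref{jump_rel_Lapl}) one reads off $[u]_{\dataDirichlet,i}=-\mu_{\dataDirichlet}$, $[u]_{\dataNeumann,i}=-\mu_{\dataNeumann}$, and from the definitions of the boundary integral operators the averages $\{u\}_{\dataDirichlet,i}=\mathsf{V}_i(s)\mu_{\dataNeumann}-\mathsf{K}_i(s)\mu_{\dataDirichlet}$ and $\{u\}_{\dataNeumann,i}=\mathsf{W}_i(s)\mu_{\dataDirichlet}+\mathsf{K}_i'(s)\mu_{\dataNeumann}$; hence $\bigl(\boldsymbol{\mathsf{P}}_i(s)-\tfrac{\boldsymbol{Id}}{2}\bigr)\boldsymbol{\mu}=\transpose{(\{u\}_{\dataDirichlet,i},\{u\}_{\dataNeumann,i})}$ and $\boldsymbol{\mathsf{A}}_i(s)\boldsymbol{\phi}=\transpose{(s^{1/2}\{u\}_{\dataDirichlet,i},\,s^{-1/2}\{u\}_{\dataNeumann,i})}$.

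Next I would insert this into the symmetric pairing (\ref{eq:sympairing}). Using $\overline{\phi_{\dataNeumann}}=\overline{s^{-1/2}}\,\overline{\mu_{\dataNeumann}}$ and $\overline{\phi_{\dataDirichlet}}=\overline{s^{1/2}}\,\overline{\mu_{\dataDirichlet}}$, the half‑integer powers combine into the unimodular factors $s/\abs{s}$ and $\overline{s}/\abs{s}$, giving
\[
\left\langle \boldsymbol{\mathsf{A}}_i(s)\boldsymbol{\phi},\overline{\boldsymbol{\phi}}\right\rangle_{\Gamma_i}^+=\frac{s}{\abs{s}}\left\langle \{u\}_{\dataDirichlet,i},\overline{\mu_{\dataNeumann}}\right\rangle_{\Gamma_i}+\frac{\overline{s}}{\abs{s}}\left\langle \{u\}_{\dataNeumann,i},\overline{\mu_{\dataDirichlet}}\right\rangle_{\Gamma_i}.
\]
On the other hand, Green's first identity in $\Omega_i$ and in $\mathbb{R}^3\setminus\overline{\Omega_i}$ (the sphere‑at‑infinity term vanishing by the decay), combined with $a_i^2\Delta u=p_i^2 s^2 u$ and the jump/average relations above, yields
\[
\int_{\mathbb{R}^3\setminus\Gamma_i}\bigl(a_i^2\abs{\gradient u}^2+p_i^2 s^2\abs{u}^2\bigr)\de x=\left\langle \{u\}_{\dataNeumann,i},\overline{\mu_{\dataDirichlet}}\right\rangle_{\Gamma_i}+\overline{\left\langle \{u\}_{\dataDirichlet,i},\overline{\mu_{\dataNeumann}}\right\rangle_{\Gamma_i}}.
\]
Since $\real z=\real\overline{z}$, comparing the two displays gives the crucial identity
\[
\real\left\langle \boldsymbol{\mathsf{A}}_i(s)\boldsymbol{\phi},\overline{\boldsymbol{\phi}}\right\rangle_{\Gamma_i}^+=\real\left(\frac{\overline{s}}{\abs{s}}\int_{\mathbb{R}^3\setminus\Gamma_i}\bigl(a_i^2\abs{\gradient u}^2+p_i^2 s^2\abs{u}^2\bigr)\de x\right)=\frac{\real s}{\abs{s}}\Bigl(a_i^2\norm{\gradient u}_{L^2(\mathbb{R}^3)}^2+\abs{s}^2 p_i^2\norm{u}_{L^2(\mathbb{R}^3)}^2\Bigr),
\]
which is bounded below by $\min\{a_i^2,p_i^2\}\,\tfrac{\real s}{\abs{s}}\min\{1,\abs{s}^2\}\norm{u}_{H^1(\mathbb{R}^3\setminus\Gamma_i)}^2$; in particular it is nonnegative.

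It remains to bound $\norm{\boldsymbol{\phi}}_{\XTraces{i}}^2$ from above by the energy appearing on the right. Since $\phi_{\dataDirichlet}=-s^{1/2}[u]_{\dataDirichlet,i}$ and $\phi_{\dataNeumann}=-s^{-1/2}[u]_{\dataNeumann,i}$, the ordinary interior and exterior trace theorems (with $s$‑independent constants) give $\norm{\phi_{\dataDirichlet}}_{H^{1/2}(\Gamma_i)}\lesssim\abs{s}^{1/2}\norm{u}_{H^1(\mathbb{R}^3\setminus\Gamma_i)}$. For the Neumann jump I would test against $w\in H^{1/2}(\Gamma_i)$, lift $w$ to the extension $\widetilde w\in H^1(\mathbb{R}^3)$ solving $-\Delta\widetilde w+\abs{s}^2\widetilde w=0$ with $\widetilde w|_{\Gamma_i}=w$, and use Green's identity together with $a_i^2\Delta u=p_i^2 s^2 u$ to obtain $\abs{\langle[u]_{\dataNeumann,i},w\rangle_{\Gamma_i}}\lesssim\bigl(\norm{\gradient u}^2+\abs{s}^2\norm{u}^2\bigr)^{1/2}\bigl(\norm{\gradient\widetilde w}^2+\abs{s}^2\norm{\widetilde w}^2\bigr)^{1/2}$; a dilation argument $x\mapsto\abs{s}x$ bounds the second factor by $C\max\{1,\abs{s}^{1/2}\}\norm{w}_{H^{1/2}(\Gamma_i)}$. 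Collecting the two bounds, one gets $\norm{\boldsymbol{\phi}}_{\XTraces{i}}^2\lesssim\abs{s}\bigl(\norm{\gradient u}_{L^2}^2+\abs{s}^2\norm{u}_{L^2}^2\bigr)$ when $\abs{s}\geq1$, and $\norm{\boldsymbol{\phi}}_{\XTraces{i}}^2\lesssim\norm{u}_{H^1(\mathbb{R}^3\setminus\Gamma_i)}^2$ when $\sigma_0\leq\abs{s}\leq1$ (where the Neumann bound additionally uses $\norm{\Delta u}_{L^2}\lesssim\norm{u}_{L^2}$ for $\abs{s}\leq1$).

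Feeding these into the crucial identity: for $\abs{s}\geq1$ we obtain $\real\langle\boldsymbol{\mathsf{A}}_i(s)\boldsymbol{\phi},\overline{\boldsymbol{\phi}}\rangle_{\Gamma_i}^+\gtrsim\tfrac{\real s}{\abs{s}}\bigl(\norm{\gradient u}^2+\abs{s}^2\norm{u}^2\bigr)\gtrsim\tfrac{\real s}{\abs{s}^2}\norm{\boldsymbol{\phi}}_{\XTraces{i}}^2$, and for $\sigma_0\leq\abs{s}\leq1$ we obtain $\real\langle\boldsymbol{\mathsf{A}}_i(s)\boldsymbol{\phi},\overline{\boldsymbol{\phi}}\rangle_{\Gamma_i}^+\gtrsim\real s\,\norm{u}_{H^1}^2\gtrsim\real s\,\norm{\boldsymbol{\phi}}_{\XTraces{i}}^2$; since in both regimes $\min\{1,\abs{s}^2\}\tfrac{\real s}{\abs{s}^2}$ equals $\tfrac{\real s}{\abs{s}^2}$, respectively $\real s$, this is exactly the asserted estimate with some $\beta>0$ depending only on $\sigma_0,a_i,p_i$. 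I expect the main obstacle to be the third paragraph: establishing the frequency‑explicit trace inequalities, in particular the $H^{-1/2}(\Gamma_i)$‑bound for the Neumann jump with the correct growth in $\abs{s}$, which forces one to exploit the PDE (so that $\norm{\Delta u}_{L^2}$ is controlled by $\abs{s}^2\norm{u}_{L^2}$) together with an $\abs{s}$‑adapted lifting whose weighted $H^1$‑norm grows no faster than $\abs{s}^{1/2}$. These estimates, and the companion bounds for the layer operators recorded in Proposition~\ref{PropCont}, are precisely the technical input taken from \cite{bambduong,LaSa2009}.
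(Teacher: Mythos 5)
Your proof is correct in substance, but it follows a genuinely different route from the paper. The paper's own proof is a two-line reduction: it observes that $\left\langle \boldsymbol{\mathsf{A}}_i(s)\,\cdot,\cdot\right\rangle_{\Gamma_i}^+$ can be rewritten, after a permutation and sign flip of the arguments, as a pairing with the operator $\boldsymbol{\mathsf{B}}_i(s)=\begin{bmatrix} s\mathsf{V}_i(s) & \mathsf{K}_i(s)\\ -\mathsf{K}_i'(s) & \frac1s\mathsf{W}_i(s)\end{bmatrix}$, and then imports the coercivity of $\boldsymbol{\mathsf{B}}_i(s)$ wholesale from \cite[Lem.~3.1]{banjai_coupling}. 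You instead reprove that coercivity from scratch by the Bamberger--Ha~Duong energy argument: lifting the rescaled Cauchy pair $\boldsymbol{\mu}=\boldsymbol{\mathsf{D}}(s)^{-1}\boldsymbol{\phi}$ through $u=\mathsf{S}_i(s)\mu_{\dataNeumann}-\mathsf{D}_i(s)\mu_{\dataDirichlet}$, using the jump relations and Green's identity in $\Omega_i$ and its complement, and obtaining the identity $\real\left\langle \boldsymbol{\mathsf{A}}_i(s)\boldsymbol{\phi},\overline{\boldsymbol{\phi}}\right\rangle_{\Gamma_i}^+=\frac{\real s}{\abs{s}}\bigl(a_i^2\norm{\gradient u}^2+\abs{s}^2p_i^2\norm{u}^2\bigr)$; I checked the unimodular factors $s/\abs{s}$, $\overline{s}/\abs{s}$, the sign conventions in the jump/average algebra, and the identity $\real(\overline{s}s^2)=\abs{s}^2\real s$, and they are all consistent with the paper's definitions, as are your two-regime trace bounds, which reproduce exactly the factor $\min\{1,\abs{s}^2\}\frac{\real s}{\abs{s}^2}$ with $\beta$ depending only on $\sigma_0,a_i,p_i$ (permitted by the paper's Convention). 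What your approach buys is self-containedness and an explicit account of where the frequency weights come from; what it costs is precisely the technical machinery you flag in the last paragraph, namely the frequency-explicit lifting estimate $\norm{\gradient\widetilde w}^2+\abs{s}^2\norm{\widetilde w}^2\lesssim\max\{1,\abs{s}\}\norm{w}_{H^{1/2}(\Gamma_i)}^2$ and the $H^{-1/2}$ bound for the Neumann jump -- the ``dilation'' heuristic needs to be replaced by a localized or Fourier-based argument on a fixed Lipschitz surface, but these are indeed the standard lemmas of \cite{bambduong,LaSa2009} (and are also what underlies the cited result of \cite{banjai_coupling}), so no essential gap remains.
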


\begin{proof}
Fix $i\in\{1,2\}$. A straightforward calculation shows that
\[
\left\langle \boldsymbol{\mathsf{A} } _{i}(s)
\begin{pmatrix}
\varphi\\
\psi
\end{pmatrix}
,
\begin{pmatrix}
\overline{\kappa}\\
\overline{\rho}%
\end{pmatrix}
\right\rangle ^{+}_{\Gamma_{i}} = \left\langle
\begin{pmatrix}
\overline{\rho}\\
- \overline{\kappa}%
\end{pmatrix}
, \boldsymbol{\mathsf{B} } _{i}(s)
\begin{pmatrix}
\psi\\
- \varphi
\end{pmatrix}
\right\rangle _{\Gamma_{i}}%
\]
for $\mathbf{B}(s)\coloneqq
\begin{bmatrix}
s \mathsf{V} _{i} (s) & \mathsf{K} _{i} (s)\\
- \mathsf{K} _{i}^{\prime}(s) & \frac{1}{s} \mathsf{W} _{i} (s)
\end{bmatrix}
.$

This operator was analyzed in \cite[Lem. 3.1]{banjai_coupling}: it maps
$H^{-1/2} \left(  \Gamma_{i} \right)  \times H^{1/2} \left(  \Gamma_{i}
\right) $ continuously into $H^{1/2} \left(  \Gamma_{i} \right)  \times
H^{-1/2} \left(  \Gamma_{i} \right) $ and satisfies the coercivity estimate
\begin{equation*}
  \real \left\langle
    \begin{pmatrix}
      \overline{\psi}\\
      \overline{\varphi}
    \end{pmatrix}
    , \vectorValued{\laplaceOperator{B}}_i (\overline s)
    \begin{pmatrix}
      \psi\\
      \varphi
    \end{pmatrix}
  \right\rangle_{\Gamma_{i}} \geq\beta\min\left\{ 1,\abs{s}^{2} \right\} \frac{\real s}{\abs{s}^{2}} \left( \norm{\varphi}_{H^{1/2} (\Gamma_i)}^{2} + \norm{\psi}_{H^{-1/2} (\Gamma_{i})}^{2} \right),
\end{equation*}
for all $\left(  \varphi, \psi\right)  \in H^{1/2} \left(  \Gamma_{i} \right)
\times H^{-1/2} \left(  \Gamma_{i} \right) $.
\end{proof}

\begin{lemma}
\label{th:continuousLemma} The sesquilinear form $\left( \boldsymbol{\hat \phi} ,
\boldsymbol{\hat \psi} \right)  \mapsto\left\langle \boldsymbol{\mathsf{A} } (s)
\boldsymbol{\hat \phi} ,\overline{\boldsymbol{\hat \psi} } \right\rangle _{\Sigma
}^{+}$ is continuous and coercive: there exist constants $\eta,\zeta>0$,
possibly depending on $\sigma_{0}$ but not on $s \in\mathbb{C}_{\sigma_{0}}$
such that
\begin{align*}
  \abs{\left\langle \vectorValued{\laplaceOperator A}(s) \vectorValued{\hat{\phi}}, \overline{\vectorValued{\hat{\psi}}} \right\rangle_{\Sigma}^+} & \leq \eta \abs{s}^{2} \left( \norm{\vectorValued{\hat{\phi}}}_{\XMulti} \norm{\vectorValued{\hat{\psi}}}_{\XMulti} \right)  & \forall\boldsymbol{\hat{\phi}} ,\boldsymbol{\hat{\psi}}  & \in \XMulti, &  &\\
\real \left\langle \boldsymbol{\mathsf{A} } (s)\boldsymbol{\hat{\phi}} ,\overline{\boldsymbol{\hat{\phi}} }\right\rangle _{\Sigma}^{+}  &  \geq \zeta \frac{\real s}{\abs{s} ^{2}} \norm{\vectorValued{\hat{\phi}}}_{\XMulti}^{2} & \forall\boldsymbol{\hat{\phi}}  & \in\boldsymbol{X} ^{\operatorname{mult}} . &  &
\end{align*}
\end{lemma}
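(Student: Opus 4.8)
The plan is to reduce everything to the two single-surface estimates of Propositions~\ref{PropCont} and~\ref{PropEll} by exploiting the block structure. Since $\vectorValued{\laplaceOperator A}(s) = \diag\left(\vectorValued{\laplaceOperator A}_1(s), \vectorValued{\laplaceOperator A}_2(s)\right)$ and the skeleton pairing splits as $\left\langle \vectorValued{\hat\phi}, \vectorValued{\hat\psi}\right\rangle_{\Sigma}^{+} = \sum_{j=1,2}\left\langle \vectorValued{\hat\phi}_j, \vectorValued{\hat\psi}_j\right\rangle_{\Gamma_j}^{+}$, we have
\[
\left\langle \vectorValued{\laplaceOperator A}(s)\vectorValued{\hat\phi}, \overline{\vectorValued{\hat\psi}}\right\rangle_{\Sigma}^{+} = \sum_{j=1,2}\left\langle \vectorValued{\laplaceOperator A}_j(s)\vectorValued{\hat\phi}_j, \overline{\vectorValued{\hat\psi}_j}\right\rangle_{\Gamma_j}^{+},
\]
where $\vectorValued{\hat\phi}_j = \transpose{\left(\hat\phi_{j,\dataDirichlet}, \hat\phi_{j,\dataNeumann}\right)} \in \XTraces{j}$. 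Both asserted bounds then follow from the corresponding per-block bounds, the final assembly using only the discrete Cauchy--Schwarz inequality and $\norm{\vectorValued{\hat\phi}}_{\XMulti}^2 = \norm{\vectorValued{\hat\phi}_1}_{\XTraces{1}}^2 + \norm{\vectorValued{\hat\phi}_2}_{\XTraces{2}}^2$.

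For continuity, I would fix $j$ and expand $\left\langle \vectorValued{\laplaceOperator A}_j(s)\vectorValued{\hat\phi}_j, \overline{\vectorValued{\hat\psi}_j}\right\rangle_{\Gamma_j}^{+}$ using (\ref{defAsj}) and the definition (\ref{eq:sympairing}) of $\left\langle\cdot,\cdot\right\rangle_{\Gamma_j}^{+}$, obtaining the four scalar terms built from $\left\langle \laplaceOperator K_j(s)\hat\phi_{j,\dataDirichlet}, \overline{\hat\psi_{j,\dataNeumann}}\right\rangle_{\Gamma_j}$, $s\left\langle \laplaceOperator V_j(s)\hat\phi_{j,\dataNeumann}, \overline{\hat\psi_{j,\dataNeumann}}\right\rangle_{\Gamma_j}$, $s^{-1}\left\langle \laplaceOperator W_j(s)\hat\phi_{j,\dataDirichlet}, \overline{\hat\psi_{j,\dataDirichlet}}\right\rangle_{\Gamma_j}$, and $\left\langle \laplaceOperator K_j'(s)\hat\phi_{j,\dataNeumann}, \overline{\hat\psi_{j,\dataDirichlet}}\right\rangle_{\Gamma_j}$. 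By the mapping properties of Proposition~\ref{PropCont}, each of $\laplaceOperator K_j(s)\hat\phi_{j,\dataDirichlet}$, $\laplaceOperator V_j(s)\hat\phi_{j,\dataNeumann}$ lies in $H^{1/2}(\Gamma_j)$ and each of $\laplaceOperator W_j(s)\hat\phi_{j,\dataDirichlet}$, $\laplaceOperator K_j'(s)\hat\phi_{j,\dataNeumann}$ lies in $H^{-1/2}(\Gamma_j)$, which is precisely the dual of the space hosting the test component it is paired against; hence every term is estimated by a genuine $H^{1/2}$--$H^{-1/2}$ duality bound together with the norm estimates of Proposition~\ref{PropCont}. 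This produces the $s$-powers $\abs{s}^{3/2}$, $\abs{s}\cdot\abs{s} = \abs{s}^2$, $\abs{s}^{-1}\cdot\abs{s}^2 = \abs{s}$, and $\abs{s}^{3/2}$ respectively. Since $\abs{s}\geq\real s > \sigma_0$ on $\mathbb{C}_{\sigma_0}$, the sub-quadratic powers are absorbed into $\abs{s}^2$ up to a $\sigma_0$-dependent factor ($\abs{s}^{3/2}\leq\max\{1,\sigma_0^{-1/2}\}\abs{s}^2$, $\abs{s}\leq\max\{1,\sigma_0^{-1}\}\abs{s}^2$); collecting the four terms and applying Cauchy--Schwarz in the scalar pair $\left(\norm{\hat\phi_{j,\dataDirichlet}}_{H^{1/2}(\Gamma_j)}, \norm{\hat\phi_{j,\dataNeumann}}_{H^{-1/2}(\Gamma_j)}\right)$ (and likewise for $\vectorValued{\hat\psi}_j$) gives $\left|\left\langle \vectorValued{\laplaceOperator A}_j(s)\vectorValued{\hat\phi}_j, \overline{\vectorValued{\hat\psi}_j}\right\rangle_{\Gamma_j}^{+}\right| \leq \eta\abs{s}^2\norm{\vectorValued{\hat\phi}_j}_{\XTraces{j}}\norm{\vectorValued{\hat\psi}_j}_{\XTraces{j}}$. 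Summing over $j=1,2$ and one final Cauchy--Schwarz yield the continuity bound on $\XMulti$.

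For coercivity, I would set $\vectorValued{\hat\psi}=\vectorValued{\hat\phi}$, use the same block splitting, and invoke Proposition~\ref{PropEll} for each block: $\real\left\langle \vectorValued{\laplaceOperator A}_j(s)\vectorValued{\hat\phi}_j, \overline{\vectorValued{\hat\phi}_j}\right\rangle_{\Gamma_j}^{+} \geq \beta\min\{1,\abs{s}^2\}\frac{\real s}{\abs{s}^2}\norm{\vectorValued{\hat\phi}_j}_{\XTraces{j}}^2$. The only extra step is that, since $\abs{s}>\sigma_0$ on $\mathbb{C}_{\sigma_0}$, one has $\min\{1,\abs{s}^2\}\geq\min\{1,\sigma_0^2\}>0$ uniformly in $s$; hence each block contributes at least $\beta\min\{1,\sigma_0^2\}\frac{\real s}{\abs{s}^2}\norm{\vectorValued{\hat\phi}_j}_{\XTraces{j}}^2$, and summing over $j$ gives the claim with $\zeta=\beta\min\{1,\sigma_0^2\}$.

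Overall this is essentially bookkeeping on top of Propositions~\ref{PropCont} and~\ref{PropEll}, and I do not anticipate a real obstacle. The one step requiring attention is the continuity estimate: one must correctly match each entry of $\vectorValued{\laplaceOperator A}_j(s)$ with the pair of trace spaces in which Proposition~\ref{PropCont} provides a bound, and then check that the governing growth is $\abs{s}^2$ (from $s\laplaceOperator V_j(s)$), with the $\abs{s}^{3/2}$ contributions of $\laplaceOperator K_j(s),\laplaceOperator K_j'(s)$ and the $\abs{s}$ contribution of $s^{-1}\laplaceOperator W_j(s)$ being strictly smaller — but only after exploiting $\abs{s}>\sigma_0$, which is precisely why $\eta$ is allowed to depend on $\sigma_0$. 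The coercivity part is immediate.
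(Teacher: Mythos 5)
Your proposal is correct and follows essentially the same route as the paper: expand the block-diagonal form $\vectorValued{\laplaceOperator A}(s)$ over $\Gamma_1,\Gamma_2$, bound each entry via the mapping properties of Proposition~\ref{PropCont} (with the dominant $\abs{s}^2$ growth coming from $s\laplaceOperator V_j(s)$ and the lower powers absorbed using $\abs{s}>\sigma_0$), and obtain coercivity directly from Proposition~\ref{PropEll}, with $\min\{1,\abs{s}^2\}\geq\min\{1,\sigma_0^2\}$ folded into $\zeta$. Your version just makes the $\sigma_0$-dependent absorption steps slightly more explicit than the paper does.
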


\begin{proof}
  We write $\norm{sV_j} $ short for the \emph{natural} operator norm, i.e.,
  $\norm{sV_j} = \norm{sV_j}_{H^{1/2} \left( \Gamma_j \right) \leftarrow H^{-1/2} \left( \Gamma_j \right)}$
  and apply this convention also for $\norm{K_j}$, $\norm{K_j'}$, $\norm{\frac{1}{s}W_j}$ denoting the natural operator norms according to the mapping properties listed in (\ref{table_map_prop}).
  We employ the mapping properties as in (\ref{table_map_prop}) and obtain, for any
  \begin{equation*}
    \vectorValued{\laplaceFunction \phi}=
    \begin{pmatrix}
      \laplaceFunction \phi_{1,\dataDirichlet}\\
      \laplaceFunction \phi_{1,\dataNeumann}\\
      \laplaceFunction \phi_{2,\dataDirichlet}\\
      \laplaceFunction \phi_{2,\dataNeumann}
    \end{pmatrix}
, \vectorValued{\laplaceFunction \psi}=
    \begin{pmatrix}
      \laplaceFunction \psi_{1,\dataDirichlet}\\
      \laplaceFunction \psi_{1,\dataNeumann}\\
      \laplaceFunction \psi_{2,\dataDirichlet}\\
      \laplaceFunction \psi_{2,\dataNeumann}
    \end{pmatrix}
    \in \XMulti
  \end{equation*}
  \begin{align*}
    \abs{\sum_{j=1}^2 \left\langle \vectorValued{\laplaceOperator A}_j(s)
    \begin{pmatrix}
      \laplaceFunction \phi_{j,\dataDirichlet}\\
      \laplaceFunction \phi_{j,\dataNeumann}
    \end{pmatrix},
    \begin{pmatrix}
      \conjugate{\laplaceFunction \psi}_{j,\dataDirichlet}\\
      \conjugate{\laplaceFunction \psi}_{j,\dataNeumann}
    \end{pmatrix}
    \right\rangle_{\XTraces{j}}^+}
    & = \abs{\sum_{j=1}^2 \left(\left\langle -\laplaceOperator K_j \hat{\phi}_{j,\dataDirichlet} + s \laplaceOperator V_j \hat{\phi}_{j,\dataNeumann}, \overline{\hat{\psi}_{j,\dataNeumann}} \right\rangle_{\Gamma_j}+ \left\langle \frac{1}{s} \laplaceOperator W_j \hat{\phi}_{j,\dataDirichlet} + \laplaceOperator K_j' \hat{\phi}_{j,\dataNeumann}, \overline{\hat{\psi}_{j,\dataDirichlet}} \right\rangle_{\Gamma_{j}} \right)}\\
    & \hspace{-1em} \leq\max_{j\in\{1,2\}} \max\left\{ \norm{s \laplaceOperator V_j}, \norm{\laplaceOperator K_j}, \norm{\laplaceOperator K_j'}, \norm{\frac{1}{s} \laplaceOperator W_j} \right\} \\
    & \times \sum_{j=1}^2 \left(\norm{\hat{\phi}_{j,\dataDirichlet}}_{H^{\frac{1}{2}}\left(\Gamma_j\right)} + \norm{\hat{\phi}_{j,\dataNeumann}}_{H^{-\frac{1}{2}}\left(\Gamma_j\right)}\right) \left(\norm{\hat{\psi}_{j,\dataDirichlet}}_{H^{\frac{1}{2}}\left( \Gamma_j \right)}+\norm{\hat {\psi}_{j,\dataNeumann}}_{H^{-\frac{1}{2}}\left(\Gamma_{j}\right)}\right) \\
    & \hspace{-1em} \overset{\text{(\ref{table_map_prop})}}{\leq} C \max\{\abs{s}^2,\abs{s}\} 2 \norm{\vectorValued{\hat{\phi}}}_{\XMulti} \norm{\vectorValued{\hat{\psi}}}_{\XMulti},
  \end{align*}
where the constant $C$ is the same as in (\ref{table_map_prop}); since $\lvert
s \rvert\geq\sigma_{0}$, taking $\eta=2C \min\left\{  1, 1/\sigma_{0} \right\}
$ the continuity estimate follows. The coercivity directly follows from
Prop.~\ref{PropEll}.
\end{proof}

\begin{remark}
The properties of $\left\langle \boldsymbol{\mathsf{A} } (s) \cdot,
\cdot\right\rangle _{\Sigma}^{+}$ as stated in Lemma~\ref{th:continuousLemma} trivially carry over to its restriction to any subspace of $\XMulti$. 
For our application, the subspace $\XSingleZero \subset \XMulti$ is of particular interest.
\end{remark}

\begin{lemma}
  \label{th:sesquilinearGeneralContinuous}
  The sesquilinear form $\aZero (s) : \XMulti \times \XMulti \to \mathbb{C}$ defined in (\ref{eq:a0}) is continuous: there exists a constant $\eta>0$ independent of $s$ such that
\[
\abs{a_0 \left(s;\vectorValued{\hat \phi}, \vectorValued{\hat \psi} \right)}
\leq\left(  \frac{1}{2} + \eta\max\left\{ \abs{s}^{2}, \abs{s} \right\}
\right)  \norm{\vectorValued{\hat \phi}}_{\XMulti}
\norm{\vectorValued{\hat{\psi}}}_{\XMulti} \quad\forall\boldsymbol{\hat \phi}
,\boldsymbol{\hat \psi} \in\boldsymbol{X} ^{\operatorname{mult}} .
\]
\end{lemma}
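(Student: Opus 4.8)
The plan is to isolate the identity contribution in \eqref{eq:a0} and reduce everything else to Lemma~\ref{th:continuousLemma}. By definition,
\[
  \aZero(s;\vectorValued{\hat \phi},\vectorValued{\hat \psi}) = \left\langle \vectorValued{\laplaceOperator A}(s)\vectorValued{\hat \phi},\conjugate{\vectorValued{\hat \psi}}\right\rangle_{\Sigma}^{+} - \frac{1}{2}\left\langle\vectorValued{\hat \phi},\conjugate{\vectorValued{\hat \psi}}\right\rangle_{\Sigma}^{+},
\]
so the triangle inequality splits the estimate into an $\vectorValued{\laplaceOperator A}(s)$-part and an identity part, which I would treat separately.

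For the $\vectorValued{\laplaceOperator A}(s)$-part I would simply quote the continuity estimate of Lemma~\ref{th:continuousLemma}, which gives $\abs{\langle \vectorValued{\laplaceOperator A}(s)\vectorValued{\hat \phi},\conjugate{\vectorValued{\hat \psi}}\rangle_{\Sigma}^{+}} \le \eta\abs{s}^{2}\norm{\vectorValued{\hat \phi}}_{\XMulti}\norm{\vectorValued{\hat \psi}}_{\XMulti}$ with the same constant $\eta$, and then weaken $\abs{s}^{2}\le\max\{\abs{s}^{2},\abs{s}\}$ to match the form claimed in the statement. For the identity part I would expand $\langle\cdot,\cdot\rangle_{\Sigma}^{+}$ into its four scalar duality pairings over $\Gamma_1$ and $\Gamma_2$, bound each $\langle\cdot,\conjugate{\cdot}\rangle_{\Gamma_j}$ by the product of the $H^{1/2}(\Gamma_j)$- and $H^{-1/2}(\Gamma_j)$-norms of its arguments (these norms being invariant under complex conjugation), and then apply Cauchy--Schwarz twice: first within each $\Gamma_j$, recognizing $\norm{\hat\phi_{j,\dataDirichlet}}_{H^{1/2}(\Gamma_j)}\norm{\hat\psi_{j,\dataNeumann}}_{H^{-1/2}(\Gamma_j)} + \norm{\hat\psi_{j,\dataDirichlet}}_{H^{1/2}(\Gamma_j)}\norm{\hat\phi_{j,\dataNeumann}}_{H^{-1/2}(\Gamma_j)}$ as a Euclidean inner product bounded by $\norm{\vectorValued{\hat\phi}_j}_{\XTraces{j}}\norm{\vectorValued{\hat\psi}_j}_{\XTraces{j}}$; then a discrete Cauchy--Schwarz over $j\in\{1,2\}$ to reach $\abs{\langle\vectorValued{\hat\phi},\conjugate{\vectorValued{\hat\psi}}\rangle_{\Sigma}^{+}} \le \norm{\vectorValued{\hat\phi}}_{\XMulti}\norm{\vectorValued{\hat\psi}}_{\XMulti}$. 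Adding the two contributions yields the prefactor $\frac{1}{2} + \eta\max\{\abs{s}^{2},\abs{s}\}$.

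I do not anticipate a genuine obstacle: the argument is essentially bookkeeping, and the hard analytic work has already been done in Lemma~\ref{th:continuousLemma}. The only point that deserves a moment's care is aligning the combinatorics of $\langle\cdot,\cdot\rangle_{\Sigma}^{+}$ -- which couples the Dirichlet component of one argument with the Neumann component of the other -- with the block structure of the $\XMulti$-norm, so that the two Cauchy--Schwarz steps close without a spurious constant; one should also note that reusing the \emph{same} $\eta$ as in Lemma~\ref{th:continuousLemma} is legitimate since $\abs{s}^{2}\le\max\{\abs{s}^{2},\abs{s}\}$ holds unconditionally.
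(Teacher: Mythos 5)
Your proposal is correct and follows essentially the same route as the paper's proof: split $\aZero(s)$ into the $-\tfrac{1}{2}\vectorValued{\identity}$ contribution, bounded by $\tfrac{1}{2}\norm{\vectorValued{\hat\phi}}_{\XMulti}\norm{\vectorValued{\hat\psi}}_{\XMulti}$ via duality and Cauchy--Schwarz, and the $\vectorValued{\laplaceOperator A}(s)$ contribution, handled by Lemma~\ref{th:continuousLemma}. You merely spell out the Cauchy--Schwarz bookkeeping and the harmless weakening $\abs{s}^{2}\leq\max\{\abs{s}^{2},\abs{s}\}$ that the paper leaves implicit.
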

\begin{proof}
For the second term in (\ref{eq:a0}) related to
\textquotedblleft$-\frac{\boldsymbol{Id} }{2}$ \textquotedblright{} we get
$\frac{1}{2} \abs{\left\langle
\vectorValued{\hat{\phi}},\overline{\vectorValued{\hat{\psi}}}\right\rangle_{\XMulti}^{+}}
\leq\frac{1}{2}\norm{\vectorValued{\hat{\phi}}}_{\XMulti}%
\norm{\vectorValued{\hat{\psi}}}_{\XMulti}.$ For the term in
(\ref{eq:a0}) related to $\boldsymbol{\mathsf{A} } (s)$, we
use Lemma~\ref{th:continuousLemma}, and the continuity estimate follows.
\end{proof}

Next, we will prove continuity and coercivity of $\aMix(s)$;

\begin{theorem}
  \label{th:mixCoercivity}
The sesquilinear form $\aMix(s)$ is coercive: for the
constant $\zeta>0$ as in Lemma~\ref{th:continuousLemma}, it holds
\[
  \real \aMix \left(s; \boldsymbol{\hat{\phi}} ,\boldsymbol{\hat{\phi}} \right)
  \geq\zeta\frac{\real s}{\abs{s} ^{2}}\norm{\vectorValued{\hat{\phi}}}_{\XMulti}^{2}\quad
  \forall\boldsymbol{\hat{\phi}} \in \XSingleZero , \forall s \in \mathbb{C}_{\sigma_{0}}.
\]
\end{theorem}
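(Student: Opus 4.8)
The plan is to prove coercivity by testing with $\vectorValued{\laplaceFunction\psi}=\boldsymbol{\hat\phi}$ in $\aMix$ and to exhibit an exact cancellation: the $-\tfrac{\vectorValued{\identity}}{2}$ contribution hidden in $\aZero$ is annihilated by the mixed Dirichlet--Neumann term of $\aImp$, so that only the coercive part of $\vectorValued{\laplaceOperator A}(s)$ together with the manifestly non-negative contribution of $\laplaceOperator T(s)$ survives. Before carrying this out I would record two elementary facts. First, for $\boldsymbol{\hat\phi}\in\XMulti$ the quantity $\langle\boldsymbol{\hat\phi},\conjugate{\boldsymbol{\hat\phi}}\rangle^+_\Sigma=2\real\sum_{j=1}^2\langle\hat\phi_{j,\dataDirichlet},\conjugate{\hat\phi_{j,\dataNeumann}}\rangle_{\Gamma_j}$ is real-valued (and likewise for $\langle\cdot,\cdot\rangle^+_{\Gamma_{\dataImpedance}}$), so no care is needed in taking real parts. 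Second, $\XSingleZero$ is invariant under complex conjugation: if $(v,\vectorValued w)$ witnesses $\boldsymbol{\hat\phi}\in\XSingleZero$ via (\ref{DefXsinglespaceD}), then $(\conjugate v,\conjugate{\vectorValued w})$ witnesses $\conjugate{\boldsymbol{\hat\phi}}$, so Proposition~\ref{th:bilinearEquivalent} may be applied to the pair $(\boldsymbol{\hat\phi},\conjugate{\boldsymbol{\hat\phi}})$.

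The heart of the argument is the cancellation. Unfolding the definition (\ref{eq:a0}),
\[
\real\aZero(s;\boldsymbol{\hat\phi},\boldsymbol{\hat\phi})=\real\langle\vectorValued{\laplaceOperator A}(s)\boldsymbol{\hat\phi},\conjugate{\boldsymbol{\hat\phi}}\rangle^+_\Sigma-\tfrac12\langle\boldsymbol{\hat\phi},\conjugate{\boldsymbol{\hat\phi}}\rangle^+_\Sigma ,
\]
and by Proposition~\ref{th:bilinearEquivalent} the second term equals $\tfrac12\langle\boldsymbol{\hat\phi},\conjugate{\boldsymbol{\hat\phi}}\rangle^+_{\Gamma_{\dataImpedance}}$. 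Since $\Gamma_{\dataImpedance}\subset\partial\Omega$ is partitioned by $\Gamma_{1,\dataImpedance}$ and $\Gamma_{2,\dataImpedance}$, the restrictions assemble into the functions $\hat\phi_{\dataDirichlet}\in\tilde H^{1/2}_{\dataDirichlet}(\Gamma_{\dataImpedance})$ and $\hat\phi_{\dataNeumann}\in\tilde H^{-1/2}_{\dataNeumann}(\Gamma_{\dataImpedance})$ introduced in Section~\ref{SecLinComb}, and expanding the definition of $\langle\cdot,\cdot\rangle^+_{\Gamma_{\dataImpedance}}$ yields $\tfrac12\langle\boldsymbol{\hat\phi},\conjugate{\boldsymbol{\hat\phi}}\rangle^+_{\Gamma_{\dataImpedance}}=\tfrac12\bigl(\langle\hat\phi_{\dataDirichlet},\conjugate{\hat\phi_{\dataNeumann}}\rangle_{\Gamma_{\dataImpedance}}+\langle\hat\phi_{\dataNeumann},\conjugate{\hat\phi_{\dataDirichlet}}\rangle_{\Gamma_{\dataImpedance}}\bigr)=\real\langle\hat\phi_{\dataNeumann},\conjugate{\hat\phi_{\dataDirichlet}}\rangle_{\Gamma_{\dataImpedance}}$, where I use that the two duality pairings between $\tilde H^{1/2}_{\dataDirichlet}(\Gamma_{\dataImpedance})$ and $\tilde H^{-1/2}_{\dataNeumann}(\Gamma_{\dataImpedance})$ both extend the $L^2(\Gamma_{\dataImpedance})$-pairing and are hence complex conjugates of each other. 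On the other hand, the definition of $\aImp$ gives $\real\aImp(s;\boldsymbol{\hat\phi},\boldsymbol{\hat\phi})=\real\langle\hat\phi_{\dataNeumann},\conjugate{\hat\phi_{\dataDirichlet}}\rangle_{\Gamma_{\dataImpedance}}-\real\langle\laplaceOperator T(s)\hat\phi_{\dataDirichlet},\conjugate{\hat\phi_{\dataDirichlet}}\rangle_{\Gamma_{\dataImpedance}}$; adding the two, the two copies of $\real\langle\hat\phi_{\dataNeumann},\conjugate{\hat\phi_{\dataDirichlet}}\rangle_{\Gamma_{\dataImpedance}}$ cancel.

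Finally I would collect the surviving terms: $\real\aMix(s;\boldsymbol{\hat\phi},\boldsymbol{\hat\phi})=\real\langle\vectorValued{\laplaceOperator A}(s)\boldsymbol{\hat\phi},\conjugate{\boldsymbol{\hat\phi}}\rangle^+_\Sigma-\real\langle\laplaceOperator T(s)\hat\phi_{\dataDirichlet},\conjugate{\hat\phi_{\dataDirichlet}}\rangle_{\Gamma_{\dataImpedance}}$. The first term is bounded below by $\zeta\tfrac{\real s}{\abs{s}^2}\norm{\boldsymbol{\hat\phi}}^2_{\XMulti}$ by the coercivity estimate of Lemma~\ref{th:continuousLemma} (which rests on Proposition~\ref{PropEll}), applied to $\boldsymbol{\hat\phi}\in\XSingleZero\subset\XMulti$; the second term is non-negative by the dissipativity (\ref{signcond}) in Assumption~\ref{A1}, applied with $\laplaceFunction\varphi=\hat\phi_{\dataDirichlet}\in\tilde H^{1/2}_{\dataDirichlet}(\Gamma_{\dataImpedance})$. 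Summing the two estimates gives the claimed bound, with the same $\zeta$, for every $s\in\mathbb{C}_{\sigma_0}$. I do not expect a genuine obstacle; the entire content sits in the cancellation of the second paragraph, and the only points requiring care are that the pairings $\langle\cdot,\cdot\rangle^+$ are real-valued here and that the duality pairing entering $\aImp$ is literally the one that appears, restricted, in $\langle\cdot,\cdot\rangle^+_{\Gamma_{\dataImpedance}}$, so that the cancellation is exact rather than only up to constants.
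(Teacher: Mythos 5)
Your proof is correct and follows essentially the same route as the paper's: rewrite $\real\aMix(s;\boldsymbol{\hat\phi},\boldsymbol{\hat\phi})$ using (\ref{eq:a0}), Proposition~\ref{th:bilinearEquivalent} and the definition of $\aImp$, observe the exact cancellation of the $\real\langle\hat\phi_{\dataNeumann},\conjugate{\hat\phi_{\dataDirichlet}}\rangle_{\Gamma_{\dataImpedance}}$ terms, and conclude from the coercivity in Lemma~\ref{th:continuousLemma} together with the sign condition (\ref{signcond}) of Assumption~\ref{A1}. Your additional remarks (real-valuedness of the $\langle\cdot,\cdot\rangle^{+}$ pairings on the diagonal and conjugation-invariance of $\XSingleZero$, needed to apply Proposition~\ref{th:bilinearEquivalent} to $(\boldsymbol{\hat\phi},\conjugate{\boldsymbol{\hat\phi}})$) only make explicit details the paper leaves implicit.
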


\begin{proof}
  From (\ref{eq:a0}), (\ref{th:bilinearEquivalent}) and the definition of $\aImp (s)$ we obtain
  \begin{align*}
    \real \aMix(s;\boldsymbol{\hat{\phi}} ,\boldsymbol{\hat{\phi}}) & = \real \left( \left\langle \boldsymbol{\mathsf{A}}(s) \boldsymbol{\hat{\phi}}, \conjugate{\boldsymbol{\hat{\phi}}} \right\rangle _{\Sigma}^{+} + a^{\operatorname{imp}}(s;\boldsymbol{\hat{\phi}}, \boldsymbol{\hat{\phi}} ) - \frac{1}{2} \left\langle \boldsymbol{\hat{\phi}}, \conjugate{\boldsymbol{\hat{\phi}}} \right\rangle ^{+}_{\Gamma_{\dataImpedance}}\right) \\
                                                                    &  = \real \left(  \left\langle \boldsymbol{\mathsf{A} } (s) \boldsymbol{\hat{\phi}}, \conjugate{\boldsymbol{\hat{\phi}}} \right\rangle _{\Sigma}^{+} + \left\langle \hat{\phi}_{\dataNeumann} - \mathsf{T} (s) \hat{\phi}_{\dataDirichlet}, \overline{\hat{\phi}_{\dataDirichlet}}\right\rangle _{\Gamma_{\dataImpedance}} - \frac{1}{2} \left\langle \boldsymbol{\hat \phi} , \overline{\boldsymbol{\hat \phi} }
                                                                      \right\rangle ^{+}_{\Gamma_{\dataImpedance}} \right) \\
                                                                    &  = \real \left(  \left\langle \boldsymbol{\mathsf{A} } (s) \boldsymbol{\hat{\phi}}, \conjugate{\boldsymbol{\hat{\phi}}} \right\rangle _{\Sigma}^{+} - \left\langle \mathsf{T} (s) \hat{\phi}_{\dataDirichlet}, \overline{\hat{\phi}_{\dataDirichlet}}\right\rangle _{\Gamma_{\dataImpedance}}\right)  .
\end{align*}
We employ Assumption \ref{A1} and Lemma \ref{th:continuousLemma} to obtain%

\begin{equation*}
  \real \aMix(s; \vectorValued{\laplaceFunction{\phi}}, \vectorValued{\laplaceFunction{\phi}}) \geq
  \real \left\langle \vectorValued{\laplaceOperator{A}}(s) \vectorValued{\laplaceFunction{\phi}}, \conjugate{\vectorValued{\laplaceFunction{\phi}}} \right\rangle_{\Sigma}^{+} \geq
  \zeta \frac{\real s}{\abs{s}^{2}} \norm{\vectorValued{\laplaceFunction{\phi}}}_{\XMulti}^{2},
  \forall \vectorValued{\laplaceFunction{\phi}} \in \XSingleZero, s \in \mathbb C_{\sigma_0}.
\end{equation*}
\end{proof}

\begin{theorem}
The sesquilinear form $\aMix(s)$ is continuous: there exists
a constant $\eta>0$ independent of $s \in\mathbb{C}_{\sigma_{0}}$ such that
for all $s \in\mathbb{C}_{\sigma_{0}}$
\[
\abs{\aMix(s;\vectorValued{\laplaceFunction{\phi}},\vectorValued{\laplaceFunction{\psi}})}
\leq\left( \norm{\laplaceOperator T(s)}_{H_{\dataNeumann}^{-1/2}\left(
\Gamma_{\dataImpedance}\right)  \leftarrow H_{\dataDirichlet}^{1/2} \left(
\Gamma_{\dataImpedance}\right) } +\eta\max\left\{ \abs{s}^{2},\abs{s} \right\}
\right)  \norm{\vectorValued{\laplaceFunction{\phi}}}_{\XMulti}
\norm{\vectorValued{\laplaceFunction{\psi}}}_{\XMulti} \quad\forall\vectorValued{\laplaceFunction{\phi}}
,\vectorValued{\laplaceFunction{\psi}} \in \XSingleZero.
\]

\end{theorem}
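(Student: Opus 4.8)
The plan is to write $\aMix(s)=\aZero(s)+\aImp(s)$ and to estimate the two summands on $\XSingleZero$ separately, collecting all $s$-independent and lower-order terms into a single constant at the end by exploiting $\abs{s}\geq\sigma_0$ (so that any fixed $c>0$ may be replaced by $\tfrac{c}{\sigma_0}\max\{\abs{s}^2,\abs{s}\}$).

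For the $\aZero$-part there is nothing to prove afresh: since $\XSingleZero\subset\XMulti$, Lemma~\ref{th:sesquilinearGeneralContinuous} already gives $\abs{\aZero(s;\vectorValued{\laplaceFunction{\phi}},\vectorValued{\laplaceFunction{\psi}})}\leq(\tfrac{1}{2}+\eta_0\max\{\abs{s}^2,\abs{s}\})\norm{\vectorValued{\laplaceFunction{\phi}}}_{\XMulti}\norm{\vectorValued{\laplaceFunction{\psi}}}_{\XMulti}$, with $\eta_0$ the constant from Lemma~\ref{th:continuousLemma}.

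The substance is the $\aImp$-part. Splitting $\aImp(s;\vectorValued{\laplaceFunction{\phi}},\vectorValued{\laplaceFunction{\psi}})=\langle\laplaceFunction{\phi}_{\dataNeumann},\conjugate{\laplaceFunction{\psi}_{\dataDirichlet}}\rangle_{\Gamma_{\dataImpedance}}-\langle\laplaceOperator{T}(s)\laplaceFunction{\phi}_{\dataDirichlet},\conjugate{\laplaceFunction{\psi}_{\dataDirichlet}}\rangle_{\Gamma_{\dataImpedance}}$, I would bound each term by Cauchy--Schwarz with respect to the duality $\tilde{H}_{\dataDirichlet}^{1/2}(\Gamma_{\dataImpedance})=(\tilde{H}_{\dataNeumann}^{-1/2}(\Gamma_{\dataImpedance}))'$ recorded before Section~\ref{SecLinComb}: the first term is $\leq\norm{\laplaceFunction{\phi}_{\dataNeumann}}_{\tilde{H}_{\dataNeumann}^{-1/2}(\Gamma_{\dataImpedance})}\norm{\laplaceFunction{\psi}_{\dataDirichlet}}_{\tilde{H}_{\dataDirichlet}^{1/2}(\Gamma_{\dataImpedance})}$, and, using that $\laplaceOperator{T}(s)$ is bounded from $\tilde{H}_{\dataDirichlet}^{1/2}(\Gamma_{\dataImpedance})$ to $\tilde{H}_{\dataNeumann}^{-1/2}(\Gamma_{\dataImpedance})$ by Assumption~\ref{A1}, the second is $\leq\norm{\laplaceOperator{T}(s)}\,\norm{\laplaceFunction{\phi}_{\dataDirichlet}}_{\tilde{H}_{\dataDirichlet}^{1/2}(\Gamma_{\dataImpedance})}\norm{\laplaceFunction{\psi}_{\dataDirichlet}}_{\tilde{H}_{\dataDirichlet}^{1/2}(\Gamma_{\dataImpedance})}$, where $\norm{\laplaceOperator{T}(s)}$ is the operator norm appearing in the statement. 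It is essential here that, for $\vectorValued{\laplaceFunction{\phi}}\in\XSingleZero$, the functions $\laplaceFunction{\phi}_{\dataDirichlet},\laplaceFunction{\phi}_{\dataNeumann}$ on $\Gamma_{\dataImpedance}$ indeed belong to $\tilde{H}_{\dataDirichlet}^{1/2}(\Gamma_{\dataImpedance})$ and $\tilde{H}_{\dataNeumann}^{-1/2}(\Gamma_{\dataImpedance})$ --- this is the observation made just before Section~\ref{SecLinComb}, the homogeneous Dirichlet/Neumann conditions built into $\XSingleZero$ being exactly what legitimises the zero extension across $\Gamma_{\dataDirichlet}$, $\Gamma_{\dataNeumann}$ --- and that their $\tilde{H}$-norms are dominated by $\norm{\vectorValued{\laplaceFunction{\phi}}}_{\XMulti}$, which follows from continuity of the restriction and trace maps together with the definitions of $\XSingleZero$ and of the graph norm on $\XTraces{i}$.

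Summing the three estimates yields $\abs{\aMix(s;\vectorValued{\laplaceFunction{\phi}},\vectorValued{\laplaceFunction{\psi}})}\leq(c+\norm{\laplaceOperator{T}(s)}+\eta_0\max\{\abs{s}^2,\abs{s}\})\norm{\vectorValued{\laplaceFunction{\phi}}}_{\XMulti}\norm{\vectorValued{\laplaceFunction{\psi}}}_{\XMulti}$ for a fixed $c$; absorbing $c$ into the last summand as above gives the claim with $\eta\coloneqq\eta_0+\tfrac{c}{\sigma_0}$. I expect the only genuinely delicate point to be exactly this bookkeeping: for the coefficient of $\norm{\laplaceOperator{T}(s)}$ to be \emph{exactly} $1$, as in the statement, the norms on $\tilde{H}_{\dataDirichlet}^{1/2}(\Gamma_{\dataImpedance})$, $\tilde{H}_{\dataNeumann}^{-1/2}(\Gamma_{\dataImpedance})$ and on $\XTraces{i}$ must be normalised compatibly so that $\norm{\laplaceFunction{\phi}_{\dataDirichlet}}_{\tilde{H}_{\dataDirichlet}^{1/2}(\Gamma_{\dataImpedance})}\leq\norm{\vectorValued{\laplaceFunction{\phi}}}_{\XMulti}$ holds with constant $1$; any other fixed constant here would be harmless but would then have to be absorbed into $\eta$.
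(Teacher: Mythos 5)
Your proposal is correct and follows essentially the same route as the paper: split $\aMix(s)=\aZero(s)+\aImp(s)$, invoke Lemma~\ref{th:sesquilinearGeneralContinuous} for $\aZero$, bound the impedance contribution via the duality $\tilde{H}_{\dataDirichlet}^{1/2}(\Gamma_{\dataImpedance})=(\tilde{H}_{\dataNeumann}^{-1/2}(\Gamma_{\dataImpedance}))'$ and the boundedness of $\laplaceOperator{T}(s)$ from Assumption~\ref{A1}, and absorb the $s$-independent constants into $\eta\max\{\abs{s}^{2},\abs{s}\}$ using $\abs{s}\geq\sigma_{0}$. You are in fact slightly more explicit than the paper's terse proof, which passes directly from $\aMix$ to $\abs{\aZero}+\abs{\langle \laplaceOperator{T}(s)\laplaceFunction{\phi}_{\dataDirichlet},\overline{\laplaceFunction{\psi}_{\dataDirichlet}}\rangle_{\Gamma_{\dataImpedance}}}$ and leaves the cross term $\langle\laplaceFunction{\phi}_{\dataNeumann},\overline{\laplaceFunction{\psi}_{\dataDirichlet}}\rangle_{\Gamma_{\dataImpedance}}$ and the norm-compatibility of the $\tilde{H}$-spaces with $\norm{\cdot}_{\XMulti}$ implicit; only your closing aside that a constant in front of $\norm{\laplaceOperator{T}(s)}$ could be absorbed into $\eta$ should be dropped, since $\norm{\laplaceOperator{T}(s)}$ is not assumed to be dominated by $\max\{\abs{s}^{2},\abs{s}\}$.
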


\begin{proof}
The definition of $\aMix(s)$ implies
\[
\abs{\aMix \left(s;\vectorValued{\hat{\phi}},\vectorValued{\hat{\psi}}\right)}
\leq\abs{a_0 (s;\vectorValued{\hat{\phi}},\vectorValued{\hat{\psi}})} +
\abs{\left\langle \laplaceOperator T(s) \hat{\phi}_{\dataDirichlet}, \overline{\hat{\psi}_{\dataDirichlet}} \right\rangle_{\Gamma_{\dataImpedance}}}.
\]

Lemma~\ref{th:sesquilinearGeneralContinuous} gives an estimate for the first term, while the continuity of the second term follows from the continuity of $\laplaceOperator{T}$:
\begin{equation*}
  \abs{\left\langle \laplaceOperator{T}(s) \laplaceFunction{\phi}_{\dataDirichlet}, \overline{\laplaceFunction{\psi}_{\dataDirichlet}} \right\rangle_{\Gamma_{\dataImpedance}}}
  \leq\norm{\laplaceOperator T(s)}_{H_{\dataNeumann}^{-1/2} \left( \Gamma_{\dataImpedance}\right) \leftarrow H_{\dataDirichlet}^{1/2} \left(\Gamma_{\dataImpedance}\right) } \norm{\vectorValued{\hat \phi}}_{\XMulti} \norm{\vectorValued{\hat \psi}}_{\XMulti}.
\end{equation*}
\end{proof}

Coercivity in time domain is obtained as in \cite[Section~3.7]{sayas_book}:
recall the coercivity estimate:
\begin{equation*}
  \real \aMix \left(s; \vectorValued{\laplaceFunction{\phi}} ,\vectorValued{\laplaceFunction{\phi}} \right) \geq
  \zeta\frac{\real s}{\abs{s} ^{2}} \norm{\vectorValued{\laplaceFunction{\phi}}}_{\XMulti}^{2} \geq \zeta \sigma_0 \norm{s^{-1} \vectorValued{\laplaceFunction{\phi}}}_{\XMulti}^{2},
\end{equation*}
since $\real s \geq \sigma_{0}$;
from
\begin{equation*}
  \real \aMix(s;\vectorValued{\laplaceFunction{\phi}} ,\vectorValued{\laplaceFunction{\phi}}) =
  \real \left(\left\langle \vectorValued{\laplaceOperator{A}}(s) \vectorValued{\laplaceFunction{\phi}}, \conjugate{\vectorValued{\laplaceFunction{\phi}}} \right\rangle_{\Sigma}^{+} -
    \left\langle \laplaceOperator{T}(s) \laplaceFunction{\phi}_{\dataDirichlet}, \overline{\laplaceFunction{\phi}_{\dataDirichlet}} \right\rangle _{\Gamma_{\dataImpedance}}\right),
\end{equation*}
we get that the time domain form of the coercivity estimate is, for $\vectorValued{\timeFunction{\phi}} \in \mathcal C^0(\intervalCO{0,\infty}, \XSingleZero), \sigma_{0}>0$:
\begin{equation*}
  \real \int_0^{\infty} \e^{-2 \sigma_{0} t} \left(\left\langle \vectorValued{\timeOperator{A}}(t) * \vectorValued{\timeFunction{\phi}} ,\conjugate{\vectorValued{\timeFunction{\phi}}} \right\rangle_{\Sigma}^{+} -
    \left\langle \timeOperator{T}(t) * \timeFunction{\phi}_{\dataDirichlet}, \overline{\timeFunction{\phi}_{\dataDirichlet}} \right\rangle _{\Gamma_{\dataImpedance}}\right)  \de t
  \geq \zeta \sigma_{0} \int_{0}^{\infty} \e^{-2 \sigma_{0} t} \norm{\partial_{t}^{-1} \vectorValued{\timeFunction{\phi}}(t)}_{\XMulti}^2.
\end{equation*}
\bibliographystyle{abbrv}
\bibliography{nlailu}
\end{document}